\providecommand{\keywords}[1]{\textbf{Keywords} #1}
\newtheorem{remark}{Remark}
\newtheorem{result}{Result}
\newtheorem{theorem}{Theorem}
\begin{document}

\title{Apriori and aposteriori error estimation of Subgrid multiscale stabilized finite element method for fully coupled Navier-Stokes\\
 Transport model}

\author{B.V. Rathish Kumar, Manisha Chowdhury\thanks{ Email addresses:  drbvrk11@gmail.com (B.V.R. Kumar); chowdhurymanisha8@gmail.com(M.Chowdhury)  } }
     
\date{Indian Institute of Technology Kanpur \\ Kanpur, Uttar Pradesh, India}

\maketitle

\begin{abstract}
In this paper a fully coupled system of transient $Navier$-$Stokes$ ($NS$) fluid flow model and variable coefficient unsteady Advection-Diffusion-Reaction ($VADR$) transport model has been studied through subgrid multiscale stabilized finite element method. In particular algebraic approach of approximating the subscales has been considered to arrive at stabilized variational formulation of the coupled system. This system is strongly coupled since viscosity of the fluid depends upon the concentration, whose transportation is modelled by $VADR$ equation. Fully implicit schemes have been considered for time discretisation. Further more elaborated derivations of both $apriori$ and $aposteriori$ error estimates for stabilized finite element scheme have been carried out. Credibility of the stabilized method is also established well through various numerical experiments, presented before concluding.
\end{abstract}

\keywords{ Navier-Stokes equation $\cdot$ Advection-Diffusion-Reaction equation $\cdot$  Subgrid multiscale stabilized method $\cdot$ Apriori error estimation $\cdot$ Aposteriori error estimation}

\section{Introduction}
For more than a decade transport equation coupled with fluid flow model attracted the attention of researchers due to its significant role in modelling various real life problems ranging from environmental issues to physiological importance. For instance contemporary world is tackling with the challenges of ground water pollution due to diffusion of pollutant transported through rivers, use of drug-eluting stents to remove stenosis in human arteries after implanting stents into them etc. Authors of $\cite{RefJ}-\cite{RefN}$ have studied various coupled systems involving different fluid flow models and transport equation. Whereas $Vassilev$ and $Yotov$ in $\cite{RefJ}$ have presented a mixed finite element analysis of coupled Stokes-Darcy-Transport model, $Hui$ and $Jhang$ in $\cite{RefM}$ have discussed about a stabilized mixed finite element method for coupled transient Stokes-Darcy flows with transport. $Cesmelio\breve{g}lu$ $et$ $al.$  have studied continuous and discontinuous finite element methods for coupled Navier-Stokes/Darcy and transport problems in $\cite{RefK}$ and $Cesmelio\breve{g}lu$ together with  $Rivi\grave{e}re$  in $\cite{RefL} $ have presented a mathematical analysis of existence and uniqueness of coupled NS-Darcy- unsteady Transport equation. Recently $Chowdhury$ and $Kumar$ $\cite{RefN}$ have considered to study subgrid scale stabilized finite element analysis of coupled Stokes-Brinkman-Transport problem. Importantly authors in $\cite{RefL}$-$\cite{RefN}$ have considered the viscosity of fluid flow problem dependent on concentration of the solute transported into the fluid. In few recent works authors in $\cite{RefW}$-$\cite{RefX}$ have focused on studying advection-diffusion transport equation coupled with incompressible Navier-Stokes equation. Whereas $Du$ and $Liu$ in $\cite{RefW}$ have worked with lattice Boltzmann model, $Yua$ $et$ $al.$ ($\cite{RefX}$) have studied finite difference method for the coupled model. Both of these studies have considered constant viscosity coefficient which indicates an one-way or weak coupling between fluid flow model and transport equation. In this paper we have presented a stabilized finite element analysis of transient Navier-Stokes ($NS$) fully-coupled with unsteady Advection-diffusion-reaction equation with variable coefficients ($VADR$). $Subgrid$ $multiscale$ ($SGS$), a most general finite element stabilization technique, has been employed to study this coupled system. We have considered concentration dependent viscosity in the fluid flow model as well as spatially variable diffusion coefficients in transport equation. These considerations make this coupling  not only two-sided or strong but also more efficient to model the contemporary real life challenges accurately. The previous studies on coupled $NS$-Transport model have neither considered variable viscosity and diffusion coefficients nor discussed about any error estimation for the method applied to study the model. In this study we have elaborately carried out both $apriori$ and $aposteriori$ error estimations for a general finite element stabilization scheme to study strongly coupled transient $NS$-$VADR$ model. \vspace{1mm}\\
It is well known fact that lack of stability in standard Galerkin finite element method has driven researchers to introduce stabilized methods such as Streamline upwind/Petrov-Galerkin ($SUPG$) formulation, Galerkin/least-squares ($G$ $LS$) method, characteristic-based split ($CBS$) method, Subgrid Scale ($SGS$) method, bubble stabilization etc. Over last four decades huge developments have been taken place in the study of various stabilization techniques. Starting with the works of $Brooks$ and $Hughes$ $\cite{RefA}$ on $SUPG$; $Hughes$, $Franca$ and $Hulbert$ $\cite{RefB}$ on $GLS$; $Hughes$ introducing $SGS$ in $\cite{RefC}$; $Russo$ $\cite{RefH}$ explaining bubble stabilization method for the linearized incompressible $NS$ equations, the stabilization schemes have been growing through the studies of $Hannani$ $et$ $al.$ $\cite{RefD}$ on comparison between $SUPG$ and $GLS$ formulation for steady state incompressible $NS$ equations; $Codina$ and $Zienkiewicz$ $\cite{RefI}$ on comparison of $CBS$ and $GLS$; $Codina$ $et$ $al.$ on numerical comparison of $CBS$ and $SGS$ method of the incompressible $NS$ equations in $\cite{RefE}$; $Russo$ $\cite{RefG}$ on comparison of $SUPG$ and residual-free bubbles($RFB$); $Kirk$ and $Carey$ $\cite{RefF}$ on development and validation of $SUPG$ for compressible $NS$ equations etc. $Codina$ in $\cite{RefO} $ has experimentally established that for solving $ADR$ equation $SGS$ method performs better than other stabilized methods, such as $SUPG$, $GLS$, $Taylor$-$Galerkin$ etc. and in fact it is the most general method amongst them. \vspace{1mm}\\
 Generally two approaches of $SGS$ stabilized formulation, namely algebraic approach, abbreviated as $ASGS$ and orthogonal projection approach, known as $OSGS$ method,  have been studied. Though few studies $\cite{RefA1}$- $\cite{RefA3}$ are there applying only general form of $SGS$ method instead of working with one specific approach, but authors in $\cite{RefB1}$-$\cite{RefB4}$ have employed $ASGS$ method, whereas authors of $\cite{RefC1}$- $\cite{RefC5}$ have worked with $OSGS$ method. Again $Badia$ and $Codina$ in $\cite{RefV}$ have studied both the approaches for unified Stokes-Darcy fluid flow problem and experimentally established equally well performances of both the stabilized formulations. In this paper we have considered algebraic approach of approximating subscales which implies the stabilization parameters are of algebraic forms. This stabilization method begins with division of weak solution space into the spaces of the known finite element space and an unknown subgrid or unresolvable scale space and finally the stabilized formulation has been reached through expressing the element of subgrid scales in terms of the element of resolvable finite element space. For time discretization fully implicit schemes have been chosen. A detailed derivation of $apriori$ error estimation has been carried out for this stabilized variational form of the coupled system. Further more residual based $aposteriori$ error estimate too is derived elaborately. First order convergence in space has been established with respect to complete norms on all the variables. This paper also establishes the accuracy of the stabilized method through various numerical results, which include all possible combinations of cases containing small and large Reynolds numbers as well as cases involving concentration dependent viscosity. In every numerical example $ASGS$ performs consistently well in compared to standard Galerkin finite element method.\vspace{1mm}\\
This paper is organised as follows: Section 2 introduces the coupled system along with important assumptions. In the next section we have introduced weak formulation, stabilized formulation and finally fully-discrete formulation after applying time-discretization rule. This section  also contains stability analysis of the fully-discrete stabilized form. Section 4 has elaborately described the derivations of $apriori$ and $aposteriori$ error estimations for this stabilized formulation. Before concluding the article section 5 presents numerical results to verify accuracy of the method.

\section{Model problem}
In this section we introduce the  flow problem described through transient $Navier$-$Stokes$ equations coupled with unsteady transport model over an open bounded domain $\Omega \subset$ $\mathbb{R}^d$, d=2,3 with piece-wise smooth boundary $\partial\Omega$. Let us first present the $Navier$-$Stokes$ fluid flow model in the following: 
Find velocity $\textbf{u}$: $\Omega$ $\times$ (0,T) $\rightarrow \mathbb{R}^d$ and pressure $p$: $\Omega \times$ (0,T) $\rightarrow \mathbb{R}$ of the fluid such that,
\begin{equation}
\begin{split}
\rho \frac{\partial \textbf{u}}{\partial t}+\rho ( \textbf{u} \cdot \bigtriangledown ) \textbf{u} - \mu(c) \Delta \textbf{u} + \bigtriangledown p & = \textbf{f} \hspace{2mm} in \hspace{2mm} \Omega \times (0,T) \\
\bigtriangledown \cdot \textbf{u} &= \textbf{0} \hspace{2mm} in \hspace{2mm} \Omega \times (0,T) \\
\textbf{u} &= \textbf{0} \hspace{2mm} on \hspace{2mm} \partial\Omega \times (0,T) \\
\textbf{u} &= \textbf{u}_0 \hspace{2mm} at \hspace{2mm} t=0 \\
\end{split}
\end{equation} 
where $\rho$ is the density of the fluid, $\mu(c)$ is the dynamic viscosity of the fluid depending on concentration $c$ of the dispersing mass of the solute,  $\textbf{f}$ is body force and $\textbf{u}_0$ is the initial velocity. \vspace{2 mm}\\
This above flow problem is fully-coupled with the following transient advection-diffusion-reaction equation with variable coefficients($VADR$), representing the transportation of solute in $\Omega$.\\
Find the concentration $c$: $\Omega \times$ (0,T) $\rightarrow \mathbb{R}$ of the solute such that,
\begin{equation}
\begin{split}
 \frac{\partial c}{\partial t}- \bigtriangledown \cdot \tilde{\bigtriangledown} c + \textbf{u} \cdot \bigtriangledown c + \alpha c & = g \hspace{2mm} in \hspace{2mm} \Omega \times (0,T) \\
c &= 0 \hspace{2mm} on \hspace{2mm}\partial \Omega \times (0,T) \\
c & = c_0 \hspace{2mm} at \hspace{2mm} t=0\\
\end{split}
\end{equation}
where the notation, $\tilde{\bigtriangledown}: = \sum_{i=1}^d D_i \frac{\partial}{\partial x_i} e_i $ for $d=2,3$ and $\{e_i \}_{i=1}^d$ is standard basis of $\mathbb{R}^d$. $D_i$ are variable diffusion coefficients, $\alpha$ is the reaction coefficient and $g$ denotes the source of solute mass and $c_0$ is the initial concentration of the solute.  \vspace{1.0 mm}\\ 
Let us consider a notation \textbf{U}= (\textbf{u},p,c) and the system of equations can be written in the following operator form,
\begin{equation}
M\partial_t \textbf{U} + \mathcal{L}(\textbf{u}; \textbf{U}) = \textbf{F}
\end{equation}
where M, a matrix = diag($\rho$,$\rho$,0,1), $\partial_t \textbf{U} = (\frac{\partial \textbf{u}}{\partial t}, \frac{\partial p}{\partial t}, \frac{\partial c}{\partial t})^T$ \\
\[
\mathcal{L} (\textbf{u}; \textbf{U})=
  \begin{bmatrix}
  \rho ( \textbf{u} \cdot \bigtriangledown ) \textbf{u}  - \mu(c) \Delta \textbf{u} + \bigtriangledown p\\
    \bigtriangledown \cdot \textbf{u} \\
    - \bigtriangledown \cdot \tilde{\bigtriangledown} c + \textbf{u} \cdot \bigtriangledown c + \alpha c 
  \end{bmatrix}
\]
 and \[
\textbf{F}=
  \begin{bmatrix}
    \textbf{f} \\
    0 \\
    g
  \end{bmatrix}
\]
Let us introduce the adjoint $\mathcal{L}^*$ of $\mathcal{L}$ as follows,
\[
\mathcal{L}^* (\textbf{u}; \textbf{U})=
  \begin{bmatrix}
  -\rho ( \textbf{u} \cdot \bigtriangledown ) \textbf{u} - \mu(c) \Delta \textbf{u}  - \bigtriangledown p\\
    -\bigtriangledown \cdot \textbf{u} \\
    - \bigtriangledown \cdot \tilde{\bigtriangledown} c - \textbf{u} \cdot \bigtriangledown c + \alpha c 
  \end{bmatrix}
\]
Now we assume suitable conditions on the coefficients mentioned above, which will be useful to conclude the results further. \vspace{1mm} \\
\textbf{(i)} The fluid viscosity $\mu(c)= \mu \in C^0(\mathbb{R}^+; \mathbb{R}^+)$, the space of positive real valued functions defined on positive real numbers and we will have two positive real numbers $\mu_l$ and $\mu_u$ such that 
\begin{equation}
0 < \mu_l \leq \mu(x) \leq \mu_u \hspace{2mm} for \hspace{2mm} any \hspace{2mm} x\in \mathbb{R}^+
\end{equation}
\textbf{(ii)} $D_i= D_i(\textbf{x},t) \in C^0(\mathbb{R}^d \times (0,T);\mathbb{R})$ (for $i=1,...,d$) where $ C^0(\mathbb{R}^d \times (0,T);\mathbb{R})$ is the space of real valued continuous function defined on $\mathbb{R}^d$ for fixed $t \in (0,T)$. Both are bounded quantity that is we can find lower and upper bounds for both of them.\vspace{1mm} \\
\textbf{(iii)} $\rho$ and $\alpha$ are positive constants. \vspace{1mm}\\
\textbf{(iv)} The spaces of continuous solution $(\textbf{u},p,c)$ are assumed as: \\ $\textbf{u} \in L^{\infty}(0,T;(H^2(\Omega))^d)\bigcap C^{0}(0,T; H_0^1(\Omega))$ and \\
$p \in L^{\infty}(0,T;H^1(\Omega))\bigcap C^{0}(0,T;L^2_0(\Omega)) $,
 $c \in L^{\infty}(0,T;H^2(\Omega))\bigcap C^0(0,T;H^1_0(\Omega))$ \vspace{0.1mm}\\
\textbf{(v)} Additional assumptions on continuous velocity and concentration solutions are: $\textbf{u}_{tt}, \textbf{u}_{ttt}, c_{tt}, c_{ttt}$ all are taken to be bounded functions on $\Omega$ for each $t \in (0,T)$. \vspace{1mm}\\
\textbf{Weak formulation}: Assuming the body force $\textbf{f} \in L^2(0,T; (L^2(\Omega))^d)$ and the source term $g \in L^2(0,T; L^2(\Omega))$ let us consider the spaces suitable to define the weak form as $V=H^1_0(\Omega)$ and $Q=L^2(\Omega)$ and J= (0,T).\vspace{1mm}\\
%Let $\tilde{V} $ := $L^2(0,T; V)\bigcap L^{\infty}(0,T; Q)$ \vspace{1 mm} \\
Now denoting the space $V^d \times Q \times V$ (for $d=2,3$) by $\textbf{V}_F$ the weak formulation of (3) is to find \textbf{U}= (\textbf{u},p,c): J $ \rightarrow \textbf{V}_F$ such that $\forall$ \textbf{V}=(\textbf{v},q,d) $\in  \textbf{V}_F$
\begin{equation}
\begin{split}
(M\partial_t \textbf{U},\textbf{V}) + B(\textbf{u}; \textbf{U}, \textbf{V}) = L(\textbf{V})  
\end{split}
\end{equation} 
where 
$B(\textbf{u}; \textbf{U}, \textbf{V}) = c(\textbf{u},\textbf{u},\textbf{v})+ a_{NS}(\textbf{u},\textbf{v})- b(\textbf{v},p)+ b(\textbf{u},q)+ a_T(c,d)$ \vspace{1mm}\\
$(M \partial_t \textbf{U}, \textbf{V})= \rho \int_{\Omega} \frac{\partial \textbf{u}}{\partial t} \cdot \textbf{v}+\int_{\Omega}\frac{\partial c}{\partial t} d$ and $L(\textbf{V})= l_{NS}(\textbf{v})+ l_T(d) $ \vspace{1mm}\\
where the notations are defined in the following:\\
 $c(\textbf{u},\textbf{v},\textbf{w})=\rho \int_{\Omega} ((\textbf{u} \cdot \bigtriangledown)\textbf{v})\cdot \textbf{w}+ \frac{\rho}{2} \int_{\Omega} (\bigtriangledown \cdot \textbf{u})\textbf{v} \cdot \textbf{w} $\vspace{1 mm}\\
$a_{NS}(\textbf{u},\textbf{v})= \int_{\Omega} \mu(c) \bigtriangledown \textbf{u}:\bigtriangledown \textbf{v}$ \vspace{1 mm}\\
 $b(\textbf{v},q)= \int_{\Omega} (\bigtriangledown \cdot \textbf{v}) q$ \vspace{1 mm} \\
 $a_T(c,d) = \int_{\Omega} \tilde{\bigtriangledown}c \cdot \bigtriangledown d + \int_{\Omega} d \textbf{u} \cdot \bigtriangledown c + \alpha\int_{\Omega}cd $ \vspace{1 mm} \\
 $l_{NS} (\textbf{v})= \int_{\Omega} \textbf{f} \cdot \textbf{v}$ and  $l_T(d)= \int_{\Omega} gd$ \vspace{1 mm} \\
The modified trilinear form $c(\cdot,\cdot,\cdot)$ considered here is equivalent to the original trilinear form obtained from the non-linear convective term in (1). By the virtue of this modified form the trilinear term introduces the following important property. \vspace{1mm}\\
\textbf{(a)} For any $\textbf{u} \in V^d$, $c(\textbf{u},\textbf{v},\textbf{v})= 0$  $\forall \textbf{v} \in V^d$ \vspace{1mm}\\
Besides the trilinear form $c(\cdot,\cdot,\cdot)$ has the another property \cite{RefR} too. \vspace{1mm}\\
\textbf{(b)} For any $\textbf{u}, \textbf{v}, \textbf{w}$ $\in V^d$
 \begin{equation}
  c(\textbf{u},\textbf{v},\textbf{w})\leq \begin{cases}
    C \|\textbf{u}\|_1 \|\textbf{v}\|_1 \|\textbf{w}\|_1 & \\
    C \|\textbf{u}\|_0 \|\textbf{v}\|_2 \|\textbf{w}\|_1 & \\
    C \|\textbf{u}\|_2 \|\textbf{v}\|_1 \|\textbf{w}\|_0 & \\
    C \|\textbf{u}\|_0 \|\textbf{v}\|_1 \|\textbf{w}\|_{L^{\infty}(\Omega)} 
  \end{cases}
\end{equation}
where $C$ is a constant and $\| \cdot \|_j$ for $ j$=0,1,2 denote standard $L^2, H^1, H^2$ full norms respectively.
In the other forms the bilinear forms $a_{NS}(\cdot, \cdot)$ is $coercive$ $\cite{RefR}$ and $a_T(\cdot,\cdot)$ is also $continuous$ and $coercive$ $\cite{RefS}$. Again $b(\cdot,\cdot)$ satisfies $inf$-$sup$ condition $\cite{RefR}$ too.\vspace{1mm}\\

\section{Discrete formulation}
\subsection{Semi-discrete formulation}
In this section we introduce the finite element space discretization for the variational formulation (5) followed by a stabilized finite element formulation for the same.\vspace{1 mm} \\
Let the domain $\Omega$ be discretized into finite numbers of subdomains $\Omega_k$ for k=1,2,...,$n_{el}$, where $n_{el}$ is the total number element subdomains. Let $h_k$ be the diameter of each subdomain $\Omega_k$ and h= $\underset{k=1,2,...n_{el}}{max} h_k$ \vspace{1 mm}\\
Let $\tilde{\Omega}= \bigcup_{k=1}^{n_{el}} \Omega_k$ be the union of interior elements.\vspace{1 mm}\\
Let $V^h= \{ v \in V: v(\Omega_k)= \mathcal{P}^k(\Omega_k)\} $ and 
$Q^h= \{ q \in Q : q(\Omega_k)= \mathcal{P}^k(\Omega_k)\}$ \vspace{1 mm}\\
where $V^h$ and $Q^h$ be finite dimensional subspaces of $V$ and $Q$ respectively and $\mathcal{P}^k(\Omega_k)$  denotes complete polynomial of order $k$ over each $\Omega_k$ for k=1,2,...,$n_{el}$. For regular partitions the functions belonging to finite dimensional spaces satisfy the following inverse inequalities:\\
\begin{center}
$\|\Delta v_h\| \leq C_I h^{-1} \|\bigtriangledown v_h\|_{0,k}$ \hspace{2mm} and \hspace{2mm} $\|\bigtriangledown v_h\|_{0,k} \leq C_I h^{-1} \|v_h\|_{0,k}$
\end{center}
Considering similar notation $\textbf{V}_F^h$, denoting $\textbf{V}_F^h=(V^h)^d \times Q^h \times V^h $ the \textbf{finite element formulation}  of the variational form (5) in the finite dimensional space $\textbf{V}_F^h$ is to
find $\textbf{U}_h $= $(\textbf{u}_h,p_h,c_h)$: J $ \rightarrow \textbf{V}_F^h$ such that $\forall$ $\textbf{V}_h=(\textbf{v}_h,q_h,d_h)$ $\in \textbf{V}_F^h$
\begin{equation}
(M\partial_t \textbf{U}_h,\textbf{V}_h) + B(\textbf{u}_h; \textbf{U}_h, \textbf{V}_h) = L(\textbf{V}_h)   
\end{equation}
where $(M\partial_t \textbf{U}_h,\textbf{V}_h)$= $ \rho (\frac{\partial \textbf{u}_h}{\partial t}, \textbf{v}_{h})+ (\frac{\partial c_h}{\partial t}, d_h)$ \\
 $B(\textbf{u}_h; \textbf{U}_h, \textbf{V}_h)$ = $ c(\textbf{u}_h,\textbf{u}_h,\textbf{v}_h)+ a_{NS}(\textbf{u}_h,\textbf{v}_h)- b(\textbf{v}_h,p_h)+ b(\textbf{u}_h,q_h)+ a_T(c_h,d_h)$ \\
 and $L(\textbf{V}_h)= l_{NS}(\textbf{v}_h)+ l_T(d_h) $ \vspace{2mm}\\
 In addition let us consider the initial conditions $(\textbf{u}_h, \textbf{v}_h)\mid_{t=0}=(\textbf{u}_0, \textbf{v}_h)$ $\forall \textbf{v}_h \in (V^h)^d $ and $(c_h,d_h)\mid_{t=0}= (c_0,d_h)$ $\forall d_h \in V^h$. \vspace{1mm}\\
Now we are going to introduce $subgrid$ $multiscale$ stabilized finite element method with algebraic approximation of the subscales of (5).
It involves decomposition of the weak solution space $\textbf{V}_F$ into the spaces of resolvable scales and unresolvable or subgrid scales. The finite element space $\textbf{V}_F^h$ is chosen to be the space of resolvable scales and in literature one of the ways of choosing the space of subgrid scales is the space that completes $\textbf{V}_F^h$ in $\textbf{V}_F$. Then the final form of subgrid formulation will be arrived while the elements of subgrid scales will be expressed in the terms of elements of resolvable scales. \vspace{1 mm} \\
The \textbf{stabilized algebraic subgrid multiscale ($ASGS$) formulation} for this coupled equation to 
find $\textbf{U}_h $= $(\textbf{u}_h,p_h,c_h)$: J $ \rightarrow \textbf{V}_F^h$ such that $\forall$ $\textbf{V}_h=(\textbf{v}_h,q_h,d_h)$ $\in \textbf{V}_F^h$ 
\begin{equation}
(M\partial_t \textbf{U}_h,\textbf{V}_h) + B_{ASGS}(\textbf{u}_h ; \textbf{U}_h, \textbf{V}_h)  = L_{ASGS}(\textbf{V}_h)  
\end{equation}
where $B_{ASGS}(\textbf{u}_h ;\textbf{U}_h, \textbf{V}_h)= B(\textbf{u}_h; \textbf{U}_h, \textbf{V}_h)+ \sum_{k=1}^{n_{el}} (\tau_k'(M\partial_t \textbf{U}_h + \mathcal{L}(\textbf{u}_h ;\textbf{U}_h)-\textbf{d}), -\mathcal{L}^*(\textbf{u}_h;\textbf{V}_h))_{\Omega_k}- \sum_{k=1}^{n_{el}}((I-\tau_k^{-1}\tau_k')(M\partial_t \textbf{U}_h + \mathcal{L}(\textbf{u}_h;\textbf{U}_h)), \textbf{V}_h)_{\Omega_k} \\
-\sum_{k=1}^{n_{el}} (\tau_k^{-1}\tau_k' \textbf{d}, \textbf{V}_h)_{\Omega_k}$ \vspace{2 mm}\\
$L_{ASGS}(\textbf{V}_h)= L(\textbf{V}_h)+ \sum_{k=1}^{n_{el}}(\tau_k' \textbf{F}, -\mathcal{L}^*(\textbf{u}_h;\textbf{V}_h))_{\Omega_k}- \sum_{k=1}^{n_{el}}((I-\tau_k^{-1}\tau_k')\textbf{F}, \textbf{V}_h)_{\Omega_k}$  \vspace{1 mm} \\
where the stabilization parameter $\tau_k$ is in matrix form as 
\[
\tau_k= diag(\tau_{1k},\tau_{1k},\tau_{2k},\tau_{3k}) =
  \begin{bmatrix}
    \tau_{1k} I_{d \times d} & 0 & 0 \\
    0 & \tau_{2k} & 0 \\
    0 & 0 & \tau_{3k}
  \end{bmatrix}
\]
and 
\[
\tau_k'= (\frac{1}{dt}M+ \tau_k^{-1})^{-1} =
  \begin{bmatrix}
    \frac{\tau_{1k} dt}{dt+ \rho \tau_{1k}}I_{d \times d} & 0 & 0 \\
    0 & \tau_{2k} & 0 \\
    0 & 0 & \frac{\tau_{3k} dt}{dt+ \tau_{3k}}
  \end{bmatrix}\\
  = diag (\tau_{1k}',\tau_{1k}',\tau_{2k}',\tau_{3k}') 
\]
$I_{d \times d}$ is an identity matrix for $d=2,3$.\vspace{1 mm}\\
$\textbf{d}$= $\sum_{i=1}^{n+1}(\frac{1}{dt}M\tau_k')^i(\textbf{F} -M\partial_t \textbf{U}_h - \mathcal{L}(\textbf{u}_h ;\textbf{U}_h))$ =$[\textbf{d}_1,d_2,d_3]^T$ \vspace{1 mm}\\
It can be easily observed that $d_2$ is always 0 due to the matrix M. \vspace{2 mm}\\
We have the forms of the stabilization parameters $\tau_{1k}, \tau_{2k}$ for $Navier$-$Stokes$ equation in $\cite{RefU}$ and $\tau_{3k}$ for $VADR$ equation $\cite{RefT}$ and for each k=1,2,...,$n_{el}$ all the coefficients $\tau_{ik}$ coincide with $\tau_{i}$ for i=1,2,3 and choosing the parameters $c_1,c_2,c_3$ suitably that $\tau_{i}$'s are as follows:

\begin{equation}
\begin{split}
\tau_{1k} &= \tau_{1}= (c_1 \frac{\mu_{\textbf{u}}}{h^2}+  c_2 \frac{\rho \| \textbf{u}_h \|}{h})^{-1} \\
\tau_{2k} &=\tau_{2}=\frac{h^2}{c_1 \tau_{1}} \\
\tau_{3k} & = \tau_{3}= c_3(\frac{9D}{4h^2} + \frac{3 \| \textbf{u}_h \|}{2h} + \alpha )^{-1}
\end{split}
\end{equation}
where $ \textbf{u}_h $ is the computed velocity.
\begin{remark}
Considering continuity of the solutions at the inter-element boundaries, we have not encountered with any jump term in the above stabilized formulation.
\end{remark}

\subsection{Fully-discrete formulation}
Before introducing time discretization, some notations  have been introduced: for $dt$= $\frac{T}{N}$, where $N$ is a positive integer, $t_n= n dt$ and for given $0 \leq \theta \leq 1$,
\begin{equation}
\begin{split}
f^n & = f(\cdot , t_n) \hspace{4 mm} for \hspace{2 mm} 0 \leq n \leq N\\
f^{n,\theta} &= \frac{1}{2} (1 + \theta) f^{(n+1)} + \frac{1}{2} (1- \theta) f^n \hspace{4mm} for \hspace{2mm} 0\leq n \leq N-1
\end{split}
\end{equation}
Later we will see for $\theta=0$ the discretization follows Crank-Nicolson formula and for $\theta=1$ it is backward Euler discretization rule.\vspace{1mm}\\
For sufficiently smooth function $f(t)$, using the Taylor series expansion about t= $t^{n,\theta}$, we will have \vspace{1mm}\\
\begin{equation}
\begin{split}
f^{n+1} & = f(t^{n,\theta})+ \frac{(1-\theta)  dt}{2} \frac{\partial f}{\partial t}(t^{n,\theta}) + \frac{(1-\theta)^2 dt^2}{8} \frac{\partial^2 f}{\partial t^2} (t^{n,\theta}) + \mathcal{O}(dt^3)\\
f^{n} & = f(t^{n,\theta})- \frac{(1+\theta) dt}{2} \frac{\partial f}{\partial t}(t^{n,\theta}) + \frac{(1+\theta)^2 dt^2}{8} \frac{\partial^2 f}{\partial t^2}(t^{n,\theta}) + \mathcal{O}(dt^3)
\end{split}
\end{equation}
We have considered here $t^{n,\theta}- t^n= \frac{(1+\theta) \Delta t}{2}$\\
Multiplying the above first and second sub-equations in (14) by $\frac{1+\theta}{2}$ and $\frac{1-\theta}{2}$ respectively and then adding them we will have the following\\
\begin{equation}
f^{n,\theta} = f(t^{n,\theta}) + \frac{1}{8} (1+\theta)(1-\theta) dt^2 \frac{\partial^2 f}{\partial t^2}(t^{n,\theta}) + \mathcal{O}(dt^3)
\end{equation} 
Let $\textbf{u}^{n,\theta},p^{n,\theta},c^{n,\theta}$ be approximations of $\textbf{u}(\textbf{x},t^{n,\theta}), p(\textbf{x},t^{n,\theta}),c(\textbf{x},t^{n,\theta})$ respectively. Now by Taylor series expansion \cite{RefQ},we have 
\begin{equation}
\begin{split}
\frac{\textbf{u}^{n+1}-\textbf{u}^n}{dt} & = \frac{\partial \textbf{u}}{\partial t}(\textbf{x},t^{n,\theta}) + \textbf{TE}_1\mid_{t=t^{n,\theta}} \hspace{4mm} \forall \textbf{x} \in \Omega  \\
\frac{c^{n+1}-c^n}{dt} & = \frac{\partial c}{\partial t}(\textbf{x},t^{n,\theta}) + TE_2\mid_{t=t^{n,\theta}} \hspace{5mm} \forall \textbf{x} \in \Omega
\end{split}
\end{equation}
where the truncation error $TE\mid_{t=t^{n,\theta}}$ $\simeq$ $TE^{n,\theta}$ depends upon time-derivatives of the respective variables and $dt$.
\begin{equation}
\begin{split}
\|\textbf{TE}_1^{n,\theta}\| & \leq
      \begin{cases}
      C' dt \|\textbf{u}_{tt}^{n,\theta}\|_{L^{\infty}(t^n,t^{n+1},L^2)} & if \hspace{1mm} \theta=1 \\
    C'' dt^2 \|\textbf{u}_{ttt}^{n,\theta}\|_{L^{\infty}(t^n,t^{n+1},L^2)} & if \hspace{1mm} \theta=0
      \end{cases}
\end{split}
\end{equation}
%The truncation error is of $O(dt \theta +dt^2(1-\theta)^3+dt^2(1+\theta)^3)$ \cite{RefQ}\\
The above relation holds for $TE_2$ in similar manner.
Now applying assumption $\textbf{(v)}$ on $\textbf{u}_{tt}$ and $\textbf{u}_{ttt}$ we will have another property as follows:
\begin{equation}
\begin{split}
\|\textbf{TE}_1^{n,\theta}\| & \leq\begin{cases}
      C' dt  & if \hspace{1mm} \theta=1 \\
    C'' dt^2 & if \hspace{1mm} \theta=0
      \end{cases}
\end{split}
\end{equation}
Similarly
\begin{equation}
\begin{split}
\| TE_2^{n,\theta}\| & \leq\begin{cases}
      C' dt  & if \hspace{1mm} \theta=1 \\
    C'' dt^2 & if \hspace{1mm} \theta=0
      \end{cases}
\end{split}
\end{equation}
After introducing all the required definitions finally the fully-discrete formulation of $subgrid$ form is as follows: For given $\textbf{U}_h^n = (\textbf{u}_h^n,p_h^n,c_h^n)\in \textbf{V}_F^h$ find $\textbf{U}_h^{n+1}= (\textbf{u}_h^{n+1},p_h^{n+1},c_h^{n+1}) \in \textbf{V}_F^h $  such that , $\forall \hspace{1mm} \textbf{V}_h=(\textbf{v}_h,q_h,d_h) \in \textbf{V}_F^h $
\begin{equation}
(M\frac{(\textbf{U}_h^{n+1}-\textbf{U}_h^n)}{dt}, \textbf{V}_h)+ B_{ASGS}(\textbf{u}_h^{n} ;\textbf{U}_h^{n,\theta}, \textbf{V}_h) = L_{ASGS}(\textbf{V}_h) + (\textbf{TE}^{n,\theta},\textbf{V}_h)  
\end{equation}
Again for the exact solution we will have the discrete formulation as follows: \\
For given $\textbf{U}^n = (\textbf{u}^n,p^n,c^n)\in \textbf{V}_F$ find $\textbf{U}^{n+1}= (\textbf{u}^{n+1},p^{n+1},c^{n+1}) \in \textbf{V}_F $  such that , $\forall \hspace{1mm} \textbf{V}_h=(\textbf{v}_h,q_h,d_h) \in \textbf{V}_F^h$
\begin{equation}
(M\frac{(\textbf{U}^{n+1}-\textbf{U}^n)}{dt}, \textbf{V}_h)+ B(\textbf{u}^{n} ;\textbf{U}^{n,\theta}, \textbf{V}_h) = L(\textbf{V}_h) + (\textbf{TE}^{n,\theta},\textbf{V}_h) 
\end{equation}

\section{Error estimates}
We start this section with introducing the projection operator corresponding to each unknown variable followed by notation of error and it's component wise splitting. Later we go to derive $ apriori$ and $aposteriori$ error estimates.

\subsection{Projection operators : Error splitting}
Let us introduce the projection operator for each of these error components.\vspace{1 mm}\\
(i)For any $\textbf{u} \in (H^2(\Omega))^d  $  we assume that there exists an interpolation $I^h_{\textbf{u}}:  (H^2(\Omega))^d \longrightarrow  (V^h)^d $ satisfying $b(\textbf{u}-I^h_{\textbf{u}}\textbf{u}, q_h)=0$ \hspace{2mm} $\forall q_h \in Q^h$ \vspace{2mm}\\
(ii) Let $I^h_p: H^1(\Omega) \longrightarrow Q^h$ be the $L^2$ orthogonal projection given by \\ $\int_{\Omega}(p-I^h_pp)q_h=0$  \hspace{1mm} $\forall q_h \in Q^h$ and for any $p \in H^1(\Omega)$ \vspace{2mm}\\
(iii)Similarly let $I^h_{c}: H^2(\Omega) \longrightarrow V^h$ be the $L^2$ orthogonal projection given by \\ $\int_{\Omega}(c-I^h_c c)d_h=0$ \hspace{1mm} $ \forall d_h \in V^h$ and for any $c \in H^2(\Omega)$ \vspace{2mm}\\
Let $\textbf{e}=(e_{\textbf{u}},e_p,e_c)$ denote the error where the components are $e_{\textbf{u}}=\textbf{u}-\textbf{u}_h, e_p= (p-p_h)$ and $e_c=(c-c_h)$. Now each component of the error can be split into two parts interpolation part, $E^I$ and auxiliary part, $E^A$ as follows: \vspace{1mm}\\
$e_{\textbf{u}}=(\textbf{u}-\textbf{u}_{h})=(\textbf{u}-I^h_{\textbf{u}}\textbf{u})+(I^h_{\textbf{u}}\textbf{u}-\textbf{u}_{h})= E^{I}_{\textbf{u}}+ E^{A}_{\textbf{u}}$ \vspace{1mm}\\
Similarly
$e_{p}=E^{I}_{p}+ E^{A}_{p}$, and 
$e_{c}=E^{I}_{c}+ E^{A}_{c}$ \vspace{2mm}\\
At this point let us mention the standard \textbf{interpolation estimation} result \cite{RefQ} in the following: for any exact solution with regularity upto (m+1)
\begin{equation}
\|v-I^h_v v\|_l = \|E^I_v\|_l \leq C(p,\Omega) h^{m+1-l} \|v\|_{m+1} 
\end{equation}
where l ($\leq m+1$) is a positive integer and C is a constant depending on m and the domain. For l=0 and 1 it  implies standard $L^2(\Omega)$ and $H^1(\Omega)$ norms respectively. For simplicity we will use $\| \cdot \|$ instead of $\| \cdot \|_0$ to denote $L^2(\Omega)$ norm.
Now we put some results using the properties of projection operators and these results will be used in error estimations.
\begin{result}
\begin{equation}
(\frac{\partial}{\partial t} E^{I,n}, v_h)=0 \hspace{2mm} v_h \in V^h
\end{equation}
\end{result}

\begin{result}
For any given auxiliary error $E^{A,n}$ and unknown $E^{A,n+1}$
\begin{equation}
(\frac{\partial}{\partial t} E^{A,n}, E^{A,n,\theta}) \geq  \frac{1}{2 dt} (\|E^{A,n+1}\|^2- \|E^{A,n}\|^2)
\end{equation}
\end{result}

\begin{remark}
The proof of the results have been discussed in  \cite{RefN} elaborately.
\end{remark}
 
\subsection{Apriori error estimate}
In this section we will find $apriori$ error bound, which depends on the exact solution. Here we first estimate $auxiliary$ error bound and later using that we will find $apriori$ error estimate. Before deriving error estimations let us define norms required for error estimations. Let us consider the space $\tilde{\textbf{V}} $ := $L^2(0,T; V_s)\bigcap L^{\infty}(0,T; Q_s)$ and it's associated norm is denoted by $\tilde{\textbf{V}}$-norm. For the functions $g_1, g_2, g_3$ belonging to the spaces $L^2(0,T; L^2(\Omega))$, $L^2(0,T; H_0^1(\Omega))$,  $\tilde{\textbf{V}}$ respectively norms over these spaces, abbreviated as $L^2(L^2)$, $L^2(H^1)$, $\tilde{\textbf{V}}$ are defined in the following
\begin{equation}
\begin{split}
\|g_1\|_{L^2(L^2)}^2 &= \sum_{n=0}^{N-1} \int_{t^n}^{t^{n+1}} \int_{\Omega} \mid g_1^{n,\theta} \mid^2 dt \\
\|g_2\|_{L^2(H^1)}^2 &= \sum_{n=0}^{N-1} \int_{t^n}^{t^{n+1}} (\int_{\Omega} \mid g_2^{n,\theta} \mid^2  +  \int_{\Omega} \mid \frac{\partial g_2}{\partial x}^{n,\theta} \mid^2  +  \int_{\Omega} \mid \frac{\partial g_2}{\partial y}^{n,\theta} \mid^2 )dt \\
\|g_3\|_{\tilde{\textbf{V}}}^2 & = \underset{0\leq n \leq N}{max} \|g_3^n\|^2 + \|g_3\|_{L^2(H^1)}^2\\
\end{split}
\end{equation}
\begin{theorem} \textbf{(Auxiliary error estimate)} 
For computed velocity $\textbf{u}_h$, pressure $p_h$ and concentration $c_h$ belonging to $(V^h)^d \times Q^h \times V^h$ satisfying (31)-(32), assume $dt$ is sufficiently small and positive, and sufficient regularity of exact solution in equations (1)-(2). Then there exists a constant C, depending upon $\textbf{u},p,c$ such that
\begin{equation}
\|E^A_{\textbf{u}}\|^2_{\tilde{\textbf{V}}} + \|E^A_p\|_{L^2(L^2)}^2  + \|E^A_{c}\|^2_{\tilde{\textbf{V}}} \leq C (h^2+ dt^{2r})
\end{equation}
where
\begin{equation}
    r=
    \begin{cases}
      1, & \text{if}\ \theta=1 \\
      2, & \text{if}\ \theta=0
    \end{cases}
  \end{equation}
\end{theorem}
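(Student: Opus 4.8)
The plan is to carry out an energy argument on the auxiliary error, using the fact that $E^A_{\textbf{u}},E^A_p,E^A_c$ all lie in $\textbf{V}_F^h$ and may therefore be used as test functions. First I would subtract the fully-discrete stabilized scheme satisfied by the computed solution from the analogous discrete equation satisfied by the exact solution, and substitute the splitting $\textbf{e}=E^I+E^A$ throughout. Testing the resulting identity with $\textbf{V}_h=(E^{A,n,\theta}_{\textbf{u}},E^{A,n,\theta}_p,E^{A,n,\theta}_c)$, the interpolation part of the discrete time-derivative term vanishes by Result 1, while the auxiliary part is bounded below by the telescoping quantities $\tfrac{\rho}{2dt}(\|E^{A,n+1}_{\textbf{u}}\|^2-\|E^{A,n}_{\textbf{u}}\|^2)$ and $\tfrac{1}{2dt}(\|E^{A,n+1}_c\|^2-\|E^{A,n}_c\|^2)$ via Result 2. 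Because $M=\mathrm{diag}(\rho,\rho,0,1)$ carries no pressure time-derivative, these telescoping terms produce exactly the $\max_n\|\cdot\|^2$ contributions of $\|E^A_{\textbf{u}}\|^2_{\tilde{\textbf{V}}}$ and $\|E^A_c\|^2_{\tilde{\textbf{V}}}$, whereas the pressure will be recovered only in the $L^2(L^2)$ norm.

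Next I would extract the coercive contributions: coercivity of $a_{NS}(\cdot,\cdot)$, using the lower bound $\mu_l$, and of $a_T(\cdot,\cdot)$ supplies $\mu_l\|\nabla E^A_{\textbf{u}}\|^2$ together with the diffusion--reaction part of $E^A_c$ on the left, which furnish the $L^2(H^1)$ components of the two $\tilde{\textbf{V}}$-norms. The leading convective self-terms vanish by the skew-symmetry property (a), so that $c(\textbf{u}_h^n,E^{A,n,\theta}_{\textbf{u}},E^{A,n,\theta}_{\textbf{u}})=0$ and likewise the analogous concentration term. Control of $\|E^A_p\|_{L^2(L^2)}$ would come from the algebraic subgrid stabilization, which supplies a mesh-dependent pressure-stabilization term; equivalently one can fall back on the inf-sup property of $b(\cdot,\cdot)$ once the velocity error is under control. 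The mixed terms $-b(E^A_{\textbf{u}},\cdot)+b(\cdot,E^A_p)$ cancel in the usual way when velocity and pressure test functions are taken together, and $b(E^I_{\textbf{u}},q_h)=0$ by the defining property of $I^h_{\textbf{u}}$.

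The bulk of the work is the right-hand-side bookkeeping. The interpolation residuals $E^I$ are bounded by the interpolation estimate $\|E^I_v\|_l\le C h^{m+1-l}\|v\|_{m+1}$; with first-order elements these contribute powers $h$ and $h^2$, whose squares are dominated by $h^2$. The time-consistency residuals are controlled by the truncation-error bounds $\|\textbf{TE}^{n,\theta}\|\le C'dt$ for $\theta=1$ and $\le C''dt^2$ for $\theta=0$, producing after squaring the announced $dt^{2r}$. The genuinely coupled pieces are the nonlinear convection and the concentration-dependent viscosity. For convection I would use the decomposition $c(\textbf{u},\textbf{u},\cdot)-c(\textbf{u}_h,\textbf{u}_h,\cdot)=c(e_{\textbf{u}},\textbf{u},\cdot)+c(\textbf{u}_h,e_{\textbf{u}},\cdot)$ and bound each piece with the continuity estimates (b), exploiting the regularity of $\textbf{u}$. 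For the viscosity I would split $\mu(c)\nabla\textbf{u}-\mu(c_h)\nabla\textbf{u}_h=\mu(c_h)\nabla e_{\textbf{u}}+(\mu(c)-\mu(c_h))\nabla\textbf{u}$ and invoke a Lipschitz (modulus-of-continuity) bound on $\mu$, so that the second term is controlled by $\|e_c\|$ times the bounded $\|\nabla\textbf{u}\|$; this is exactly the channel through which the concentration error re-enters the momentum estimate. Every such cross term is split by Young's inequality so that its gradient factor is absorbed into the coercive left-hand side and its remaining $L^2$ factor is retained as an $\|E^A\|^2$ term carrying a factor $dt$.

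Finally I would assemble everything into a recursion of the form $\|E^{A,n+1}\|^2+c\,dt\,(\text{coercive terms})\le(1+C\,dt)\|E^{A,n}\|^2+C\,dt\,(h^2+dt^{2r})$, sum over $n$, and close with the discrete Gronwall lemma, which is where the hypothesis that $dt$ be sufficiently small and positive is used. Taking $\max_n$ of the accumulated squared norms yields the $L^\infty(0,T;L^2)$ parts of the $\tilde{\textbf{V}}$-norms, summing the coercive contributions gives their $L^2(H^1)$ parts, and the stabilization (or inf-sup) term delivers $\|E^A_p\|_{L^2(L^2)}^2$. The main obstacle I anticipate is not any single estimate but the simultaneous control of the two strong-coupling channels --- the convective trilinear form and the $\mu(c)$ term --- together with the many algebraic-subgrid contributions, including the extra series term $\textbf{d}$, whose bounds rely on the inverse inequalities and on the explicit algebraic forms of $\tau_{1k},\tau_{2k},\tau_{3k}$; arranging these so that every product either lands on the coercive side or is absorbable by Gronwall, uniformly in $h$ and $dt$, is the delicate part.
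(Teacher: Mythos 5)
Your plan is correct in outline and its skeleton coincides with the paper's proof: subtract the fully-discrete scheme for the computed solution from the one satisfied by the exact solution, split $\textbf{e}=E^I+E^A$, test with $(E^{A,n,\theta}_{\textbf{u}},E^{A,n,\theta}_p,E^{A,n,\theta}_c)$, kill the interpolation part of the time term by Result 1, get the telescoping lower bound from Result 2, extract coercivity ($\mu_l$, $D_l$, $\alpha$) on the left, use property \textbf{(a)} to annihilate the convective self-term and property \textbf{(b)} plus the interpolation estimate (12) and the truncation bounds (15)--(16) on the right, and recover the pressure separately through the inf-sup condition (this is exactly the paper's ``second part''). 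Three genuine differences are worth recording. First, the paper never invokes a discrete Gronwall lemma: instead of your recursion $\|E^{A,n+1}\|^2\le(1+C\,dt)\|E^{A,n}\|^2+\dots$, it absorbs \emph{every} $\|E^{A,n,\theta}\|^2$-type product into the left-hand side by choosing the Young parameters $\epsilon_i$ and the mesh size $h$ so that all left-hand coefficients in (63)--(64) stay positive, then divides by their minimum; your Gronwall closure is the more standard and more robust mechanism, and it is precisely where your hypothesis ``$dt$ small'' earns its keep, whereas the paper instead needs $h$ small. Second, your viscosity splitting $\mu(c)\nabla\textbf{u}-\mu(c_h)\nabla\textbf{u}_h=\mu(c_h)\nabla e_{\textbf{u}}+(\mu(c)-\mu(c_h))\nabla\textbf{u}$ with a Lipschitz modulus for $\mu$ is \emph{more} careful than what the paper does: the paper writes $\mu(c^n)$ in front of both $\nabla E^{A}_{\textbf{u}}$ and $\nabla E^{I}_{\textbf{u}}$ and only ever uses $\mu_l\le\mu\le\mu_u$, so the cross term $(\mu(c)-\mu(c_h))\nabla\textbf{u}_h$, which is the second strong-coupling channel you correctly identify, never appears there; note, however, that your route needs Lipschitz continuity of $\mu$, which is not among the paper's assumptions (assumption \textbf{(i)} gives only continuity and two-sided bounds). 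Third, the stabilization terms $I_1$--$I_4$ and the series $\textbf{d}$, which you defer to inverse inequalities and the algebraic forms of $\tau_{1k},\tau_{2k},\tau_{3k}$, are in the paper handled by positing uniform pointwise bounds on the discrete functions and their derivatives over each $\Omega_k$ (the constants $D_{B_{1k}}$, $B^i_{1k}$, $D_{B_{3k}}$), summing the geometric series for $\textbf{d}$ via $\rho\tau_1'/dt<1$, and finally using $\tau_1,\tau_3=O(h^2)$ to push these contributions into the $h^2$ bucket; your inverse-inequality route is plausible but you would need to carry it out to confirm it produces the same $h^2$ scaling.
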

\begin{proof} In first part we will find bound for auxiliary error part of velocity $\textbf{u}$ and concentration c with respect to ${\tilde{\textbf{V}}}$-norm and in the second part we will estimate auxiliary error for pressure term with respect to $Q$ norm and finally combining them we will arrive at the desired result. \vspace{2mm} \\
\textbf{First part} Subtracting (17) from (18) and then simplifying the terms, we have $\forall \hspace{1mm} \textbf{V}_h \in (V^h)^d \times Q^h \times V^h$

\begin{multline}
(M\frac{(\textbf{U}^{n+1}-\textbf{U}^{n+1}_{h})- (\textbf{U}^{n}-\textbf{U}^{n}_{h})}{dt},\textbf{V}_{h}) + B(\textbf{u}^{n};\textbf{U}^{n,\theta}, \textbf{V}_h)-B(\textbf{u}_h^{n};\textbf{U}^{n,\theta}_h, \textbf{V}_h)\\
+ \sum_{k=1}^{n_{el}}(\tau_k'(M\partial_t (\textbf{U}^n-\textbf{U}^n_h)+ \mathcal{L}(\textbf{u}^{n} ;\textbf{U}^{n,\theta})-\mathcal{L}(\textbf{u}^{n}_h ;\textbf{U}^{n,\theta}_h)),-\mathcal{L}^* (\textbf{u}_h;\textbf{V}_h))_{\Omega_k} \\
+\sum_{k=1}^{n_{el}}((I-\tau_k^{-1}\tau_k')(M \partial_t(\textbf{U}^{n}-\textbf{U}^{n}_h) + \mathcal{L}(\textbf{u}^{n} ;\textbf{U}^{n,\theta})-\mathcal{L}(\textbf{u}^{n}_h ;\textbf{U}^{n,\theta}_h)), -\textbf{V}_h)_{\Omega_k}\\
+ \sum_{k=1}^{n_{el}} (\tau_k^{-1}\tau_k' \textbf{d}, \textbf{V}_h)_{\Omega_k}+\sum_{k=1}^{n_{el}}(\tau_k' \textbf{d},-\mathcal{L}^*(\textbf{u}_h; \textbf{V}_h))_{\Omega_k}=(\textbf{TE}^{n,\theta}, \textbf{V}_h)\\
\end{multline}
where $\textbf{d}$= $(\sum_{i=1}^{n+1}(\frac{1}{dt}M\tau_k')^i)(M\partial_t (\textbf{U}^n-\textbf{U}^n_h) + \mathcal{L}(\textbf{u}^{n,\theta} ;\textbf{U}^{n,\theta})-\mathcal{L}(\textbf{u}_h^{n,\theta} ;\textbf{U}_h^{n,\theta}))$ \vspace{2mm}\\
Now after applying error splitting for each of the terms and later using the result obtained in (31) and properties of projection operators we have rearranged the above equation (36) as follows: $\forall  \textbf{V}_h \in (V^h)^d \times Q^h \times V^h$
\begin{multline}
\rho (\frac{E^{A,n+1}_{\textbf{u}}-E^{A,n}_{\textbf{u}}}{dt}, \textbf{v}_h) + (\frac{E^{A,n+1}_c- E^{A,n}_c}{dt}, d_h) + \int_{\Omega} \mu(c^n) \bigtriangledown E^{A,n,\theta}_{\textbf{u}} :  \bigtriangledown \textbf{v}_h +\\
 \int_{\Omega} \mu(c^n) E^{A,n,\theta}_{\textbf{u}} \cdot \textbf{v}_h +\int_{\Omega} \tilde{\bigtriangledown} E^{A,n,\theta}_c \cdot \bigtriangledown \textbf{v}_h + \alpha \int_{\Omega} E^{A,n,\theta}_c d_h = \int_{\Omega} (\bigtriangledown \cdot \textbf{v}_h) (E^{I,n,\theta}_p + E^{A,n,\theta}_p)\\
 -\int_{\Omega}  (\bigtriangledown \cdot E^{A,n,\theta}_{\textbf{u}}) q_h - \int_{\Omega} \mu(c^n)  \bigtriangledown E^{I,n,\theta}_{\textbf{u}} :  \bigtriangledown \textbf{v}_h - \sigma \int_{\Omega} E^{I,n,\theta}_{\textbf{u}} \cdot \textbf{v}_h - \int_{\Omega} \tilde{\bigtriangledown} E^{I,n,\theta}_c \cdot \bigtriangledown d_h- \\
 \alpha \int_{\Omega} E^{I,n,\theta}_c d_h - \int_{\Omega} d_h \textbf{u}^{n} \cdot \bigtriangledown E^{I,n,\theta}_c - \int_{\Omega} d_h \textbf{u}^{n} \cdot \bigtriangledown E^{A,n,\theta}_c - \int_{\Omega} d_h E^{I,n}_{\textbf{u}} \cdot \bigtriangledown c_h^{n,\theta}- \\
 \int_{\Omega} d_h E^{A,n}_{\textbf{u}} \cdot \bigtriangledown c_h^{n,\theta} -c(\textbf{u}^{n},E^{I,n,\theta}_{\textbf{u}}, \textbf{v}_h)- c(\textbf{u}^{n},E^{A,n,\theta}_{\textbf{u}}, \textbf{v}_h) - c(E^{I,n}_{\textbf{u}}, \textbf{u}^{n, \theta}_h,\textbf{v}_h) \\
 - c(E^{A,n}_{\textbf{u}}, \textbf{u}^{n, \theta}_h,\textbf{v}_h) + \int_{\Omega} \mu(c^n) E^{A,n,\theta}_{\textbf{u}} \cdot \textbf{v}_h -I_1- I_2 -I_3 - I_4 - (\textbf{TE}^{n,\theta}, \textbf{V}_h) \hspace{5mm} 
\end{multline}
Applying various properties of the projection operators we have the final expression of $I_1$ above.

\begin{equation}
\begin{split}
I_1 &=\sum_{k=1}^{n_{el}}(\tau_k'(M\partial_t (\textbf{U}^n-\textbf{U}^n_h)+ \mathcal{L}(\textbf{u}^{n} ;\textbf{U}^{n,\theta})-\mathcal{L}(\textbf{u}^{n}_h ;\textbf{U}^{n,\theta}_h)),-\mathcal{L}^* (\textbf{u}_h;\textbf{V}_h))_{\Omega_k} \\
& = \sum_{k=1}^{n_{el}} [( \tau_{1k}' I_{d \times d} \{\rho \partial_t E^{I,n}_{\textbf{u}}+ \rho E^{I,n}_{\textbf{u}} \cdot \bigtriangledown \textbf{u}^{n,\theta} + \rho \textbf{u}_h^{n} \cdot \bigtriangledown E^{I,n,\theta}_{\textbf{u}} - \mu(c^n) \Delta E^{I,n,\theta}_{\textbf{u}} \\
& \quad + \bigtriangledown E^{I,n,\theta}_p \}, \rho(\textbf{u}_h \cdot \bigtriangledown) \textbf{v}_h + \mu(c) \Delta \textbf{u}_h + \bigtriangledown p_h)_{\Omega_k} + ( \tau_{1k}' I_{d \times d} \{\rho \partial_t E^{A,n}_{\textbf{u}}+ \rho \\
& \quad E^{A,n}_{\textbf{u}} \cdot \bigtriangledown \textbf{u}^{n,\theta} + \rho \textbf{u}_h^{n} \cdot \bigtriangledown E^{A,n,\theta}_{\textbf{u}} - \mu(c^n) \Delta E^{A,n,\theta}_{\textbf{u}} + \bigtriangledown E^{A,n,\theta}_p \}, \rho(\textbf{u}_h \cdot \bigtriangledown) \textbf{v}_h \\
& \quad + \mu(c) \Delta \textbf{u}_h + \bigtriangledown p_h)_{\Omega_k} + (\tau_{2k}' \bigtriangledown \cdot (E^{I,n,\theta}_{\textbf{u}}+E_{\textbf{u}}^{A,n,\theta}), \bigtriangledown \cdot \textbf{v}_h)_{\Omega_k}  + (\tau_{3k}' \{ \partial_t E^{I,n}_{c} \\ 
 & \quad - \bigtriangledown \cdot \tilde{\bigtriangledown} E^{I,n,\theta}_{c}+( E_{\textbf{u}}^{I,n} \cdot \bigtriangledown )c^{n,\theta} + (\textbf{u}_h^{n} \cdot \bigtriangledown) E^{I,n,\theta}_{c}+\alpha E_{c}^{I,n,\theta} \}, \bigtriangledown \cdot \tilde{\bigtriangledown} d_h \\
& \quad + \textbf{u}_h \cdot \bigtriangledown d_h - \alpha d_h )_{\Omega_k} + (\tau_3' \{ \partial_t E^{A,n}_{c} -\bigtriangledown \cdot \tilde{\bigtriangledown} E^{A,n,\theta}_{c}+( E_{\textbf{u}}^{A,n} \cdot \bigtriangledown )c^{n,\theta}   \\
& \quad + (\textbf{u}_h^{n}\cdot \bigtriangledown) E^{A,n,\theta}_{c}+\alpha E_{c}^{A,n,\theta} \},\bigtriangledown \cdot \tilde{\bigtriangledown} d_h + \textbf{u}_h \cdot \bigtriangledown d_h - \alpha d_h )_{\Omega_k}]\\
& = I_1^1 +I_1^2+I_1^3+I_1^4+ I_1^5 \hspace{2mm} (say)
\end{split} 
\end{equation} 
where $I_1^i$ for $i=1,2,...,5$ are five terms of $I_1$ which we will discuss in the later part of the proof and since $(1- \tau_2^{-1}\tau_2')=0$ the next term will take the following form 
\begin{equation}
\begin{split}
I_2 & = \sum_{k=1}^{n_{el}}((I-\tau_k^{-1}\tau_k')(M \partial_t(\textbf{U}^{n}-\textbf{U}^{n}_h) + \mathcal{L}(\textbf{u}^{n} ;\textbf{U}^{n,\theta})-\mathcal{L}(\textbf{u}^{n}_h ;\textbf{U}^{n,\theta}_h)), -\textbf{V}_h)_{\Omega_k} \\
\end{split}
\end{equation}
\begin{equation}
\begin{split}
& = \sum_{k=1}^{n_{el}} [(\frac{\rho \tau_{1k}}{dt+\rho \tau_{1k}}I_{d \times d} \{\rho \partial_t E^{I,n}_{\textbf{u}}+ \rho E^{I,n}_{\textbf{u}} \cdot \bigtriangledown \textbf{u}^{n,\theta} + \rho \textbf{u}_h^{n} \cdot \bigtriangledown E^{I,n,\theta}_{\textbf{u}} - \mu(c^n) \\
& \quad  \Delta E^{I,n,\theta}_{\textbf{u}}+ \bigtriangledown E^{I,n,\theta}_p \}, - \textbf{v}_h)_{\Omega_k} + (\frac{\rho \tau_{1k}}{dt+\rho \tau_{1k}}I_{d \times d} \{\rho \partial_t E^{A,n}_{\textbf{u}} + \rho E^{A,n}_{\textbf{u}} \cdot \bigtriangledown \textbf{u}^{n,\theta}\\
& \quad  + \rho \textbf{u}_h^{n} \cdot \bigtriangledown E^{A,n,\theta}_{\textbf{u}} - \mu(c^n) \Delta E^{A,n,\theta}_{\textbf{u}}+ \bigtriangledown E^{A,n,\theta}_p \}, - \textbf{v}_h)_{\Omega_k} + (\frac{\tau_{3k}}{dt+\tau_{3k}}\{ \partial_t E^{I,n}_{c} \\
& \quad - \bigtriangledown \cdot \tilde{\bigtriangledown} E^{I,n,\theta}_{c}+( E_{\textbf{u}}^{I,n} \cdot \bigtriangledown )c^{n,\theta} + (\textbf{u}_h^{n} \cdot \bigtriangledown) E^{I,n,\theta}_{c} +\alpha E^{I,n,\theta}_{c}\}, -d_h)_{\Omega_k} +\\
& \quad (\frac{\tau_{3k}}{dt+\tau_{3k}}\{ \partial_t E^{A,n}_{c}- \bigtriangledown \cdot \tilde{\bigtriangledown} E^{A,n,\theta}_{c}+( E_{\textbf{u}}^{A,n} \cdot \bigtriangledown )c^{n,\theta} + (\textbf{u}_h^{n} \cdot \bigtriangledown) E^{A,n,\theta}_{c} + \\
& \quad \alpha E^{A,n,\theta}_{c}\}, -d_h)_{\Omega_k}] \\
& = I_2^1 +I_2^2+I_2^3+ I_2^4 \hspace{2mm} (say)
\end{split}
\end{equation} 
The next terms of $LHS$ in (36) are as follows:
\begin{equation}
\begin{split}
I_3 & =\sum_{k=1}^{n_{el}} (\tau_k^{-1}\tau_k' \textbf{d}, \textbf{V}_h)_{\Omega_k} \\
&= \sum_{k=1}^{n_{el}} (\{\frac{dt}{dt+ \rho \tau_{1k}} I_{d \times d} \textbf{d}, \textbf{v}_h)_{\Omega_k} + (\frac{dt}{dt+\tau_{3k}} d_4, d_{h})_{\Omega_k} \} \hspace{30mm} \\
\end{split}
\end{equation}
and 
\begin{equation}
\begin{split}
I_4 & = \sum_{k=1}^{n_{el}}(\tau_k' \textbf{d},-\mathcal{L}^*(\textbf{u}_h; \textbf{V}_h))_{\Omega_k} \\
& =  \sum_{k=1}^{n_{el}}\{ (\tau_{1k}' I_{d \times d} \textbf{d},\rho (\textbf{u}_h \cdot \bigtriangledown) \textbf{v}_h +\mu(c) \Delta \textbf{v}_h  + \bigtriangledown q_h)_{\Omega_k}+ (\tau_{3k}' d_3, \bigtriangledown \cdot \tilde{\bigtriangledown} d_h +\\
& \quad \textbf{u}_h \cdot \bigtriangledown d_h - \alpha d_h )_{\Omega_k}\} \\
\end{split}
\end{equation}
Now we will treat each term separately to find out the estimate. Our procedure contains finding two bounds: one is lower bound of $LHS$ and the other one is upper bound for the terms in $RHS$ and combining those bounds in the equation (36) we will finally obtain the required estimate. Before further proceeding let us mention an important consideration: since the above equation holds for all $\textbf{V}_h \in V_s^h \times V_s^h \times Q_s^h \times V_s^h$, therefore in each term we replace $v_{1h},v_{2h},q_h,d_h$ by $E^{A,n,\theta}_{u1},E^{A,n,\theta}_{u2},E^{A,n,\theta}_{p},E^{A,n,\theta}_{c}$ respectively as these auxiliary part of the errors belonging to their respective finite element spaces. From now onwards we will start derivation of each expression after considering the replacements directly.\vspace{2mm}\\
Applying the result obtained in (32) on the first term of $LHS$ and taking out the $infimum$ of the coefficients of the remaining terms we can easily see that
\begin{multline}
\frac{\rho}{2 dt} (\|E^{A,n+1}_{\textbf{u}}\|^2-\|E^{A,n}_{\textbf{u}}\|^2)+ \frac{1}{2 dt}(\|E^{A,n+1}_{c}\|^2-\|E^{A,n}_{c}\|^2)+ \mu_l \| E^{A,n,\theta}_{\textbf{u}}\|_1^2  \\
+D_l \mid E^{A,n,\theta}_c \mid_1^2  +  \alpha \|E^{A,n,\theta}_{c}\|^2  \leq LHS = RHS\\
\end{multline}
where $D_l$= min $\{ \underset{\Omega}{inf} D_1, \underset{\Omega}{inf} D_2  \}$. \\
Now we will find upper bounds of each of the terms in the $RHS$ of the equation (37). We usually use $Cauchy-Schwarz$ and $Young's$ inequality to reach at the desired bounds. We have already estimated the bounds of few terms on $RHS$ in $\cite{RefN}$. Therefore we only mention the results here and the estimation of the remaining terms are shown later in details.
\begin{equation}
\begin{split}
\int_{\Omega}(\bigtriangledown \cdot E^{A,n,\theta}_\textbf{u})E^{I,n,\theta}_{p} 
& \leq  \epsilon_1 C^2 h^2 (\frac{1+\theta}{2}\| p^{n+1}\|_1+ \frac{1-\theta}{2}\| p^n \|_1)^2 +\\
& \quad \frac{1}{2 \epsilon_1}\mid E^{A,n,\theta}_{\textbf{u}} \mid_1^2\\
-\int_{\Omega} \mu(c^n)\bigtriangledown E^{I,n,\theta}_{\textbf{u}} : \bigtriangledown E^{A,n,\theta}_\textbf{u} & \leq \epsilon_2 \mu_u  C^2 h^2(\frac{1+\theta}{2}\| \textbf{u}^{n+1}\|_2+ \frac{1-\theta}{2} \| \textbf{u}^n\|_2)^2 +\\
& \quad  \frac{\mu_u}{2 \epsilon_2} \mid E^{A,n,\theta}_{\textbf{u}} \mid_1^2 \\
- \int_{\Omega} \tilde{\bigtriangledown}E^{I,n,\theta}_{c} \cdot \bigtriangledown E^{A,n,\theta}_c & \leq    \frac{D_m \epsilon_3}{2} C^2 h^2 (\frac{1+\theta}{2} \| c^{n+1} \|_2+\frac{1-\theta}{2}\| c^n \|_2)^2 + \\
& \quad \frac{D_m}{2 \epsilon_3}(\|\frac{\partial E^{A,n,\theta}_{c}}{\partial x}\|^2 + \|\frac{\partial E^{A,n,\theta}_{c}}{\partial y}\|^2)\\
- \sigma \int_{\Omega} E^{I,n,\theta}_{\textbf{u}} \cdot E^{A,n,\theta}_{\textbf{u}} & \leq \frac{\epsilon_4}{2}\sigma h^4 (\frac{1+\theta}{2} \|\textbf{u}^{n+1}\|_2 + \frac{1-\theta}{2} \|\textbf{u}^{n}\|_2)^2 + \\
& \quad \frac{\sigma}{ 2\epsilon_4} \|E^{A,n,\theta}_{\textbf{u}}\|^2 \\
-\alpha \int_{\Omega} E^{I,n,\theta}_c E^{A,n,\theta}_c & \leq \frac{\epsilon_5}{2} \alpha h^4 (\frac{1+\theta}{2} \|c^{n+1}\|_2 + \frac{1-\theta}{2} \|c^n\|_2)^2 + \\
& \quad \frac{\alpha}{2 \epsilon_5} \|E^{A,n,\theta}_c \|^2
\end{split}
\end{equation}  
where $D_m$= max $\{ \underset{\Omega}{sup} D_1, \underset{\Omega}{sup} D_2 \}$ \\
Now applying $Poincare$  inequality in the following we have:
\begin{equation}
\begin{split}
\int_{\Omega} \mu(c^n)\{(E^{A,n,\theta}_{u1})^2+(E^{A,n,\theta}_{u2})^2 \} & \leq \mu_u (\|E^{A,n,\theta}_{u1}\|^2+\|E^{A,n,\theta}_{u2}\|^2)\\
& \leq \mu_u C_P (\mid E^{A,n,\theta}_{u1}\mid_1^2+\mid E^{A,n,\theta}_{u2}\mid_1^2)
\end{split}
\end{equation}  
where $C_P$ is the $Poincare$ constant.  The next term is estimated following $\cite{RefN}$
\begin{equation}
\begin{split}
-\int_{\Omega}E^{A,n,\theta}_c \textbf{u}^{n} \cdot \bigtriangledown E^{A,n,\theta}_{c} &  \leq 2 \bar{C}_1^2 (\| \frac{\partial E^{A,n,\theta}_c}{\partial x} \|^2 + \| \frac{\partial E^{A,n,\theta}_c}{\partial y} \|^2) \\
-\int_{\Omega}E^{A,n,\theta}_c \textbf{u}^{n} \cdot \bigtriangledown E^{I,n,\theta}_{c} &  \leq \bar{C}_1^2 C^2 h^2 \epsilon_6 (\frac{1+\theta}{2}\| c^{n+1} \|_2 + \frac{1-\theta}{2} \| c^n \|_2)^2+ \\
& \quad \frac{\bar{C}_1^2}{\epsilon_6} \| E^{A,n,\theta}_c\|^2 
\end{split}
\end{equation}
Now we estimate the trilinear term $c(\cdot,\cdot,\cdot)$ using it's properties \textbf{(a)} and \textbf{(b)} given in section 2. Let us start the estimation with the first trilinear term on $RHS$ of  (37) as follows:
\begin{equation}
\begin{split}
-c(\textbf{u}^{n},E^{I,n,\theta}_{\textbf{u}}, E^{A,n,\theta}_{\textbf{u}}) & \leq C \|\textbf{u}^{n}\|_2 \|E^{I,n,\theta}_{\textbf{u}}\|_1 \|E^{A,n,\theta}_{\textbf{u}}\| \\
& \leq  \frac{C_2 \epsilon_7}{2} \|E^{I,n,\theta}_{\textbf{u}}\|_1^2 + \frac{C_2}{2 \epsilon_7} \|E^{A,n,\theta}_{\textbf{u}}\|^2 \\
& \leq  \frac{C_2 \epsilon_7}{2} (\frac{1+\theta}{2}\|E^{I,n+1}_{\textbf{u}}\|_1 + \frac{1-\theta}{2}\|E^{I,n}_{\textbf{u}}\|_1)^2 + \frac{C_2}{2 \epsilon_7} \|E^{A,n,\theta}_{\textbf{u}}\|^2 \\
& \leq  \frac{ \epsilon_7}{2} C_2 h^2 (\frac{1+\theta}{2} \|\textbf{u}^{n+1}\|_2 + \frac{1-\theta}{2} \|\textbf{u}^{n}\|_2)^2 + \frac{C_2}{2 \epsilon_7} \|E^{A,n,\theta}_{\textbf{u}}\|^2
\end{split}
\end{equation}
By the property \textbf{(a)} of trilinear case for both the linear and non-linear cases:
\begin{equation}
-c(\textbf{u}^{n},E^{A,n,\theta}_{\textbf{u}}, E^{A,n,\theta}_{\textbf{u}})=0 
\end{equation}
The next term,
\begin{equation}
\begin{split}
-c(E^{I,n}_{\textbf{u}},\textbf{u}^{n,\theta}_h, E^{A,n,\theta}_{\textbf{u}}) & = c(E^{I,n}_{\textbf{u}},E^{I,n,\theta}_{\textbf{u}}, E^{A,n,\theta}_{\textbf{u}})+ c(E^{I,n}_{\textbf{u}},E^{A,n,\theta}_{\textbf{u}}, E^{A,n,\theta}_{\textbf{u}}) -\\
& \quad c(E^{I,n}_{\textbf{u}},\textbf{u}^{n,\theta}, E^{A,n,\theta}_{\textbf{u}}) \\
& \leq C \|E^{I,n}_{\textbf{u}}\| \|E^{I,n,\theta}_{\textbf{u}}\|_1 \|E^{A,n,\theta}_{\textbf{u}}\|_1 +C \|E^{I,n,\theta}_{\textbf{u}}\| \|\textbf{u}^{n,\theta}\|_2 \|E^{A,n,\theta}_{\textbf{u}}\|_1 \\
& \leq h^4 \frac{C_2 \epsilon_8 }{2} \|\textbf{u}^{n}\| (\frac{1+\theta}{2} \|\textbf{u}^{n+1}\|_2 + \frac{1-\theta}{2} \|\textbf{u}^{n}\|_2)^2 +\frac{C_2}{ \epsilon_8} \|E^{A,n,\theta}_{\textbf{u}}\|_1^2 \\
& \quad +h^4 \frac{C_2 \epsilon_8}{2 } (\frac{1+\theta}{2} \|\textbf{u}^{n+1}\|_2 + \frac{1-\theta}{2} \|\textbf{u}^{n}\|_2)^2 
\end{split}
\end{equation}
and 
\begin{equation}
\begin{split}
-c(E^{A,n}_{\textbf{u}},\textbf{u}^{n,\theta}_h, E^{A,n,\theta}_{\textbf{u}}) & = c(E^{A,n}_{\textbf{u}},E^{I,n,\theta}_{\textbf{u}}, E^{A,n,\theta}_{\textbf{u}}) + c(E^{A,n}_{\textbf{u}},E^{A,n,\theta}_{\textbf{u}}, E^{A,n,\theta}_{\textbf{u}})  \\
& \quad - c(E^{A,n}_{\textbf{u}},\textbf{u}^{n,\theta}, E^{A,n,\theta}_{\textbf{u}}) \\
& \leq C  \|E^{A,n,\theta}_{\textbf{u}}\| \|E^{A,n,\theta}_{\textbf{u}}\|_1 \{ \|E^{I,n,\theta}_{\textbf{u}}\|_2  + \|\textbf{u}^{n,\theta}\|_2  \}\\
& \leq C \|E^{A,n,\theta}_{\textbf{u}}\|_1^2 (\frac{1+\theta}{2}\|E^{I,n+1}_{\textbf{u}}\|_2 + \frac{1-\theta}{2}\|E^{I,n}_{\textbf{u}}\|_2) + \\
& \quad C_2 \|E^{A,n,\theta}_{\textbf{u}}\|_1^2\\
& \leq C \|E^{A,n,\theta}_{\textbf{u}}\|_1^2 \{ (\frac{1+\theta}{2}\|\textbf{u}^{n+1}\|_2 + \frac{1-\theta}{2}\|\textbf{u}^n\|_2)+ C_2 \}\\
& \leq C_2' \|E^{A,n,\theta}_{\textbf{u}}\|_1^2
\end{split}
\end{equation}
Now we will find bounds for each remaining term of $I_1$ separately. Before going to further calculations let us mention an important observation:
By the virtue of the choices of the finite element spaces $V^h$ and $Q^h$, we can clearly say that over each element sub-domain $\Omega_k$ every function belonging to that spaces and their first and second order derivatives all are bounded functions. We can always find positive finite real numbers to bound each of the functions over element sub-domain. We will use this fact for several times further. \vspace{1mm}\\
Let us start with $I_1^1$
\begin{equation}
\begin{split}
I_1^1 & = \sum_{k=1}^{n_{el}} (\rho \tau_{1k}' I_{d \times d}  \partial_t E^{I,n}_{\textbf{u}},  \rho(\textbf{u}_h \cdot \bigtriangledown) E^{A,n,\theta}_{\textbf{u}} + \mu(c) \Delta E^{A,n,\theta}_{\textbf{u}} + \bigtriangledown E^{A,n,\theta}_{p})_{\Omega_k} +   \\
& \quad \sum_{k=1}^{n_{el}} ( \tau_{1k}' I_{d \times d} \{ \rho E^{I,n,\epsilon \theta}_{\textbf{u}} \cdot \bigtriangledown \textbf{u}^{n,\theta} + \rho \textbf{u}_h^{n,\theta} \cdot \bigtriangledown E^{I,n,\theta}_{\textbf{u}} - \mu(c^n) \Delta E^{I,n,\theta}_{\textbf{u}}  + \bigtriangledown E^{I,n,\theta}_p \},  \\
& \quad  \rho(\textbf{u}_h \cdot \bigtriangledown) E^{A,n,\theta}_{\textbf{u}} + \mu(c) \Delta E^{A,n,\theta}_{\textbf{u}} + \bigtriangledown E^{A,n,\theta}_{p})_{\Omega_k} \\
& = I_1^{11}+ I_1^{12} \hspace{2mm} (say)
\end{split}
\end{equation}
Now we present the estimations of these two terms separately in details. According to the above observation we can find bounds on each of the terms $\textbf{u}_h, E^{A,n,\theta}_{\textbf{u}},  E^{A,n,\theta}_p$ and their first and second order derivatives over each sub-domain $\Omega_k$.  Applying these bounds in the following we will have
\begin{equation}
\begin{split}
 I_1^{11} & = \int_{\Omega'} \rho \tau_{1k}'I_{d \times d} \frac{E^{I,n+1}_{\textbf{u}}-E^{I,n}_{\textbf{u}}}{dt} \{ \rho(\textbf{u}_h \cdot \bigtriangledown) E^{A,n,\theta}_{\textbf{u}} + \mu(c) \Delta E^{A,n,\theta}_{\textbf{u}} + \bigtriangledown E^{A,n,\theta}_{p}\} \\
 & \leq \rho \frac{\mid \tau_{1}'\mid}{dt} \{\sum_{k=1}^{n_{el}} D_{B_{1k}} \} (\|E^{I,n+1}_{\textbf{u}}\|+ \|E^{I,n}_{\textbf{u}}\|)\\
& \leq \rho Ch^2 \frac{\mid \tau_{1}\mid}{\mid dt+ \rho \tau_1 \mid}\{\sum_{k=1}^{n_{el}} D_{B_{1k}}\} (\|\textbf{u}^{n+1}\|_2+\|\textbf{u}^n\|_2)  \\
& \leq  h^2 \frac{\rho C C_{\tau_1}}{(T_0-\rho C_{\tau_{1}})} \{\sum_{k=1}^{n_{el}} D_{B_{1k}}\}(\|\textbf{u}^{n+1}\|_2+\|\textbf{u}^n\|_2)  \\
\end{split}
\end{equation}
where the constant $D_{B_{1k}}$ is obtained after imposing bounds on the above bracketed terms over each sub-domain $\Omega_k$. $C_{\tau_1}$and $T$ are upper bounds on respectively $\tau_1$ and $dt$. Since $dt$ is a non-zero positive real number, let $T_0$ is lower bound on $dt$. In order to make $(T_0-\rho C_{\tau_{1}})$ positive we have to take $h$ very small.\vspace{1mm}\\
Now the estimation of the second term is as follows:
\begin{equation}
\begin{split}
I_1^{12} & = \sum_{k=1}^{n_{el}} ( \tau_{1k}' I_{d \times d} \{ \rho E^{I,n}_{\textbf{u}} \cdot \bigtriangledown \textbf{u}^{n,\theta} + \rho \textbf{u}^{n} \cdot \bigtriangledown E^{I,n,\theta}_{\textbf{u}} - \rho E^{I,n}_{\textbf{u}} \cdot \bigtriangledown E^{I,n,\theta}_{\textbf{u}}- \\
& \quad  \rho E^{A,n}_{\textbf{u}} \cdot \bigtriangledown E^{I,n,\theta}_{\textbf{u}}- \mu(c^n) \Delta E^{I,n,\theta}_{\textbf{u}}  + \bigtriangledown E^{I,n,\theta}_p \}, \rho(\textbf{u}_h \cdot \bigtriangledown) E^{A,n,\theta}_{\textbf{u}} +  \\
& \quad \mu(c) \Delta E^{A,n,\theta}_{\textbf{u}} + \bigtriangledown E^{A,n,\theta}_{p})_{\Omega_k} \\
& \leq \mid \tau_{1k}' \mid [ \sum_{k=1}^{n_{el}} D_{B_{1k}} \{ \sum_{i=1}^d (\rho \|E^{I,n}_{ui} \| \| \frac{\partial u_i^{n,\theta}}{\partial x_i} \| + \rho \|u_i^{n}\| \|\frac{\partial E^{I,n,\theta}_{ui}}{\partial x_i}\| + \| E^{I,n}_{ui}\|  \\
& \quad \rho \| \frac{\partial E^{I,n,\theta}_{ui}}{\partial x_i}\| + \rho\|E^{A,n}_{ui}\|_k \| \frac{\partial E^{I,n,\theta}_{ui}}{\partial x_i}\|+ \mu(c) \| \Delta E^{I,n,\theta}_{ui}\| + \| \frac{\partial E^{I,n,\theta}_p}{\partial x_i}\| )\}]\\
 & \leq \frac{\mid \tau_{1}\mid T}{(T_0- \rho C_{\tau_1})} C (\sum_{k=1}^{n_{el}} D_{B_{1k}}) [\sum_{i=1}^d \{\rho h^2 \|\frac{\partial u_i^{n, \theta}}{\partial x_i}\| + \rho h \|u_i^{n,\epsilon\theta}\| + \rho h B^i_{1k} + \mu_u  \\
& \quad  +h^3 (\frac{1+\theta}{2} \|u_i^{n+1}\|_2 + \frac{1-\theta}{2} \|u_i^n\|_2)\} (\frac{1+\theta}{2} \|u_i^{n+1}\|_2 + \frac{1-\theta}{2} \|u_i^n\|_2) + \\
& \quad h (\frac{1+\theta}{2} \|p^{n+1}\|_1 + \frac{1-\theta}{2} \|p^n\|_1) ]
\end{split}
\end{equation} 
where $B^i_{1k}$ are bounds on $E^{A,n,\theta}_{ui}$ for $i=1,...,d$. Now the next term $I_1^2$ can be estimated by dividing it into two terms $I_1^{21}$ and $ I_1^{22}$ as above. Therefore we directly start the estimation here with the first term of $I_1^2$ denoting that by $I_1^{21}$.
\begin{equation}
\begin{split}
I_1^{21} & =  \int_{\Omega'} \rho \tau_{1k}'I_{d \times d} \frac{E^{A,n+1}_{\textbf{u}}-E^{A,n}_{\textbf{u}}}{dt} \{ \rho(\textbf{u}_h \cdot \bigtriangledown) E^{A,n,\theta}_{\textbf{u}} + \mu(c) \Delta E^{A,n,\theta}_{\textbf{u}} + \bigtriangledown E^{A,n,\theta}_{p}\} \\
& \leq  \frac{\rho T C_{\tau_1}}{dt (T_0- \rho C_{\tau_1})} (\sum_{k=1}^{n_{el}} D_{B_{1k}}) \{ \|E^{A,n+1}_{\textbf{u}} \|^2 - \| E^{A,n}_{\textbf{u}}\|^2 \}
\end{split}
\end{equation}
and
\begin{equation}
\begin{split}
I_1^{22} & =  \sum_{k=1}^{n_{el}} ( \tau_{1k}' I_{d \times d} \{ \rho E^{A,n}_{\textbf{u}} \cdot \bigtriangledown \textbf{u}^{n,\theta} + \rho \textbf{u}^{n} \cdot \bigtriangledown E^{A,n}_{\textbf{u}} - \rho E^{I,n}_{\textbf{u}} \cdot \bigtriangledown E^{A,n,\theta}_{\textbf{u}}-\rho E^{A,n}_{\textbf{u}} \cdot \bigtriangledown  \\
& \quad E^{A,n,\theta}_{\textbf{u}} - \mu(c^n) \Delta E^{A,n,\theta}_{\textbf{u}}  + \bigtriangledown E^{A,n,\theta}_p \}, \rho(\textbf{u}_h \cdot \bigtriangledown) E^{A,n,\theta}_{\textbf{u}} +  \mu(c) \Delta E^{A,n,\theta}_{\textbf{u}} + \\
& \quad \bigtriangledown E^{A,n,\theta}_{p})_{\Omega_k} \\
& \leq \mid \tau_1' \mid  [ \sum_{k=1}^{n_{el}} D_{B_{1k}} \{ \sum_{i=1}^d (\rho  \|E^{A,n}_{ui}\|_k \| \frac{\partial u_i^{n}}{\partial x_i} \| + \rho \|u_i^{n,\theta} \| \| \frac{\partial E^{A,n,\theta}_{ui}}{\partial x_i}\|_k + \rho  \| E^{I,n}_{ui}\| \\
& \quad  \| \frac{\partial E^{A,n,\theta}_{ui}}{\partial x_i}\|_k +  \| E^{A,n}_{ui}\|_k \| \frac{\partial E^{A,n,\theta}_{ui}}{\partial x_i}\|_k + \mu_u \| \Delta E^{A,n,\theta}_{ui} \|_k + \| \frac{\partial E^{A,n,\theta}_p}{\partial x_i} \|_k) \} ] 
\end{split}
\end{equation}
Applying bounds on the functions belonging to $V^h$ and $Q^h$ on the above equation and denoting that bound by $\bar{D}_{B_{1k}}$ we have 
\begin{equation}
I_1^{22} \leq \frac{\mid \tau_1 \mid T}{(T_0- \rho C_{\tau_1})} (\sum_{k=1}^{n_{el}} D_{B_{1k}} \bar{D}_{B_{1k}})
\end{equation}
Now expanding out the next term of $I_1$ we can proceed to estimate that in the following way:
\begin{equation}
\begin{split}
I_1^3 & =\sum_{k=1}^{n_{el}} \int_{\Omega_k}\tau_2 \{ ( \bigtriangledown \cdot E^{I,n,\theta}_{\textbf{u}}) \cdot ( \bigtriangledown \cdot E^{A,n,\theta}_{\textbf{u}}) + ( \bigtriangledown \cdot E^{A,n,\theta}_{\textbf{u}})^2 \} \\
& \leq C_{\tau_2}\{ \sum_{i=1}^d (\| \frac{\partial E^{I,n,\theta}_{ui}}{\partial x_i}\|^2 + C_1 \| \frac{\partial E^{A,n,\theta}_{ui}}{\partial x_i}\|^2) \}\\
& \leq C_{\tau_2} \{ h^2 \sum_{i=1}^d (\frac{1+\theta}{2}\|u_i^{n+1}\|_2+ \frac{1-\theta}{2}\|u_i^n\|_2)^2+ \mid E^{A,n,\theta}_{\textbf{u}}\mid_1^2 \}
\end{split}
\end{equation}
where $C_{\tau_2}$ is the maximum numerical value of $\tau_2$ over $\Omega$. Now the remaining terms of $I_1$ associated with the variable $c$ representing concentration are estimated as follows:
\begin{equation}
\begin{split}
I_1^4 & = \sum_{k=1}^{n_{el}} \tau_{3k}'(\partial_t E^{I,n}_c, \bigtriangledown \cdot\tilde{\bigtriangledown} E^{A,n,\theta}_c+\textbf{u}_h \cdot \bigtriangledown E^{A,n,\theta}_c -\alpha E^{A,n,\theta}_c)_{\Omega_k}  +\\
& \quad \sum_{k=1}^{n_{el}} \tau_{3k}'(-\bigtriangledown \cdot\tilde{\bigtriangledown} E^{I,n,\theta}_c + (E^{I,n}_{\textbf{u}} \cdot \bigtriangledown)c^{n,\theta}+ (\textbf{u}_h^{n} \cdot \bigtriangledown)E^{I,n,\theta}_c + \alpha E^{I,n,\theta}_c ,\\
& \quad \bigtriangledown \cdot\tilde{\bigtriangledown} E^{A,n,\theta}_c+\textbf{u}_h \cdot \bigtriangledown E^{A,n,\theta}_c -\alpha E^{A,n,\theta}_c)_{\Omega_k} \\
& \leq C h^2 \mid \tau_3' \mid \{ \sum_{k=1}^{n_{el}} D_{B_{3k}} \} (\|c^{n+1}\|_2+\|c^n\|_2) + \mid \tau_3' \mid [\sum_{k=1}^{n_{el}} D_{B_{3k}} \{ \sum_{i=1}^d (\|E^{I,n}_{ui}\|  \\
& \quad \|\frac{\partial c^{n,\theta}}{\partial x_i}\| + D_{im} \| \frac{\partial^2 E^{I,n,\theta}_c}{\partial x_i^2}\|+ \bar{D}_{im} \|\frac{\partial E^{I,n,\theta}_c}{\partial x_i}\|+ (\|u_i^{n}\|+\|E^{I,n}_{ui}\|+ \|E^{A,n}_{ui}\|)\\
& \quad \|\frac{\partial E^{I,n,\theta}_c}{\partial x_i}\|+\mid \alpha \mid \|E^{I,n,\theta}_c\| )\}] \\
& \leq h^2 \frac{C C_{\tau_3}}{(T_0- C_{\tau_3})} \{ \sum_{k=1}^{n_{el}} D_{B_{3k}} \} (\|c^{n+1}\|_2+\|c^n\|_2) + \frac{C \mid \tau_3 \mid}{(T_0- C_{\tau_3})}  [\sum_{k=1}^{n_{el}} D_{B_{3k}} \sum_{i=1}^d \{h^2 \\
& \quad (\frac{1+\theta}{2}\|u_i^{n+1}\|_2+\frac{1-\theta}{2}\|u_i^n\|_2)\|\frac{\partial c^{n,\theta}}{\partial x_i}\| + (D_{im}+h \bar{D}_{im}+ \mid \alpha\mid  h^2+ h\|u_i^{n,\theta}\| \\
& \quad +hB_{1k}^i+ h^3 (\frac{1+\theta}{2}\|u_i^{n+1}\|_2+\frac{1-\theta}{2}\|u_i^n\|_2))(\frac{1+\theta}{2}\|c^{n+1}\|_2+\frac{1-\theta}{2} \|c^n\|_2)\}]
\end{split}
\end{equation}
Let $D_{B_{3k}}$ be the summation of the bounds imposed on the elements $\frac{\partial^2 E^{A,n,\theta}_c}{\partial x_i^2}$, $\frac{\partial E^{A,n,\theta}_c}{\partial x_i}$, $ E^{A,n,\theta}_c$ belonging to finite element space $V^h$ and $D_{im}, \bar{D}_{im}$ be the $supremum$ of $D_i$ and $\frac{\partial D_i}{\partial x_i}$ respectively over each sub-domain $\Omega_k$ for $i=1,...,d$. Now the estimation of  the last term $I_1^5$ follows the same way as above and considering $\bar{D}_{B_{3k}}$ as an expression to denote the estimated result briefly the derivation of the bound of $I_1^5$ is in the following:
\begin{equation}
\begin{split}
I_1^{5} & =\sum_{k=1}^{n_{el}}\tau_{3k}'(\partial_t E^{A,n}_c, \bigtriangledown \cdot\tilde{\bigtriangledown} E^{A,n,\theta}_c+\textbf{u}_h \cdot \bigtriangledown E^{A,n,\theta}_c -\alpha E^{A,n,\theta}_c)_{\Omega_k}  +\\
& \quad \sum_{k=1}^{n_{el}} \tau_{3k}'(-\bigtriangledown \cdot\tilde{\bigtriangledown} E^{A,n,\theta}_c + (E^{A,n}_{\textbf{u}} \cdot \bigtriangledown)c^{n,\theta}+ (\textbf{u}_h^{n} \cdot \bigtriangledown)E^{A,n,\theta}_c + \alpha E^{A,n,\theta}_c ,\\
& \quad \bigtriangledown \cdot\tilde{\bigtriangledown} E^{A,n,\theta}_c+\textbf{u}_h \cdot \bigtriangledown E^{A,n,\theta}_c -\alpha E^{A,n,\theta}_c)_{\Omega_k} \\ 
& \leq \frac{T C_{\tau_3}}{dt (T_0-C_{\tau_3})} \{ \sum_{k=1}^{n_{el}} D_{B_{3k}} \} (\|E^{A,n+1}_c \|^2 -\|E^{A,n}_c \|^2) + \\
& \quad \frac{C \mid \tau_3 \mid}{(T_0-C_{\tau_3})} \{\sum_{k=1}^{n_{el}} \bar{D}_{B_{3k}} D_{B_{3k}} \}
\end{split}
\end{equation}
This completes estimation of $I_1$ finally. Now we see that the terms $I_2^i$ (for $i=1,...,4$) are same as that of $I_1$. Therefore we only mention the results here for each of them as follows:
\begin{equation}
\begin{split}
I_2^{1} & = \sum_{k=1}^{n_{el}}( \frac{\rho\tau_{1k}}{dt+\rho\tau_{1k}} I_{d \times d}  \partial_t E^{I,n}_{\textbf{u}}, -E^{A,n,\theta}_{\textbf{u}})_{\Omega_k} +  \sum_{k=1}^{n_{el}} (\frac{\rho\tau_{1k}}{dt+\rho\tau_{1k}} I_{d \times d} \{ \rho E^{I,n}_{\textbf{u}} \cdot \bigtriangledown \textbf{u}^{n,\theta}   \\
& \quad + \rho \textbf{u}^{n} \cdot \bigtriangledown E^{I,n,\theta}_{\textbf{u}}- \rho E^{I,n}_{\textbf{u}} \cdot \bigtriangledown E^{I,n,\theta}_{\textbf{u}}- \rho E^{A,n}_{\textbf{u}} \cdot \bigtriangledown E^{I,n,\theta}_{\textbf{u}} - \mu(c^n) \Delta E^{I,n,\theta}_{\textbf{u}}\\
&\quad  + \bigtriangledown E^{I,n,\theta}_p \},-E^{A,n,\theta}_{\textbf{u}})_{\Omega_k}\\
& \leq  h^2 \frac{\rho C C_{\tau_1}}{(T_0- \rho C_{\tau_{1}})}[ \sum_{i=1}^d \{\sum_{k=1}^{n_{el}} B_{1k}^i \}(\|u_i^{n+1}\|_2+\|u_i^n\|_2)] +  \frac{ \mid \tau_{1}\mid}{(T_0- \rho C_{\tau_1})} \sum_{i=1}^d (\sum_{k=1}^{n_{el}} B_{1k}^i) \\
& \quad  [\{\rho h^2 \|\frac{\partial u_i^{n, \theta}}{\partial x_i}\| +\rho h \|u_i^{n}\| +\rho h B^i_{1k} + \mu_u + h^3 (\frac{1+\theta}{2} \|u_i^{n+1}\|_2 + \frac{1-\theta}{2} \|u_i^n\|_2)\}\\
& \quad   (\frac{1+\theta}{2} \|u_i^{n+1}\|_2 + \frac{1-\theta}{2} \|u_i^n\|_2) +  h (\frac{1+\theta}{2} \|p^{n+1}\|_1 + \frac{1-\theta}{2} \|p^n\|_1)  ]
\end{split}
\end{equation}
and 
\begin{equation}
\begin{split}
I_2^2 & = \sum_{k=1}^{n_{el}}( \frac{\rho\tau_{1k}}{dt+\rho\tau_{1k}} I_{d \times d}  \partial_t E^{A,n}_{\textbf{u}}, -E^{A,n,\theta}_{\textbf{u}})_{\Omega_k} +  \sum_{k=1}^{n_{el}} (\frac{\rho\tau_{1k}}{dt+\rho\tau_{1k}} I_{d \times d} \{ \rho E^{A,n}_{\textbf{u}} \cdot \bigtriangledown \textbf{u}^{n,\theta}   \\
& \quad + \rho \textbf{u}^{n} \cdot \bigtriangledown E^{A,n,\theta}_{\textbf{u}}- \rho E^{I,n}_{\textbf{u}} \cdot \bigtriangledown E^{A,n,\theta}_{\textbf{u}}- \rho E^{A,n}_{\textbf{u}} \cdot \bigtriangledown E^{A,n,\theta}_{\textbf{u}} - \mu(c^n) \Delta E^{A,n,\theta}_{\textbf{u}}\\
& \quad  + \bigtriangledown E^{A,n,\theta}_p \}, -E^{A,n,\theta}_{\textbf{u}})_{\Omega_k}\\
& \leq  \frac{\rho T C_{\tau_1}}{dt (T_0- \rho C_{\tau_1})}[\sum_{i=1}^d (\sum_{k=1}^{n_{el}} B^i_{1k}) ]\{ \|E^{A,n+1}_{\textbf{u}} \|^2 - \| E^{A,n}_{\textbf{u}}\|^2 \} +\\
& \quad \frac{\mid \tau_1 \mid T}{(T_0- \rho C_{\tau_1})} [\sum_{i=1}^d \sum_{k=1}^{n_{el}} \bar{D}_{B_{1k}} B^i_{1k} ]
\end{split}
\end{equation}
From these results it is clear that estimations of the remaining terms of $I_2$ follow the same path as we have done for $I_1^4$ and $I_1^5$. Hence considering $B_{2k}$ as bound for $E^{A,n,\theta}_c$ over each sub-domain $\Omega_k$ we are skipping the repetition in mentioning the similar kind of results, though they will be added up  in the final stage of combining all the results. \vspace{1mm}\\
Now the job is to estimate next part denoted by $I_3$ and $I_4$ of the equation (43) which contain the matrix $\textbf{d}$. Earlier we have mentioned that $d_2$ is $zero$. Let us look at the other three terms explicitly.
\begin{equation}
\begin{split}
\textbf{d}_1 & = \{\sum_{i=1}^{n+1}(\frac{\rho}{dt}\tau_1')^i\} I_{d \times d}[\rho \partial_t(\textbf{u}^n-\textbf{u}_h^n)+ \rho ((\textbf{u}^{n}-\textbf{u}_h^{n}) \cdot \bigtriangledown) \textbf{u}^{n,\theta}+\rho (\textbf{u}_h^{n} \cdot \bigtriangledown) \\
& \quad (\textbf{u}^{n,\theta}-\textbf{u}_{h}^{n,\theta})-\mu(c^n) \Delta(\textbf{u}^{n,\theta}-\textbf{u}_{h}^{n,\theta}) + \bigtriangledown (p^{n,\theta}- p_h^{n,\theta})] \\
& \leq \{\sum_{i=1}^{\infty}(\frac{\rho}{dt}\tau_1')^i\}  I_{d \times d} [\rho \partial_t (E^{I,n}_{\textbf{u}}+E^{A,n}_{\textbf{u}})+ \rho ((E^{I,n}_{\textbf{u}} + E^{A,n}_{\textbf{u}})\cdot \bigtriangledown) \textbf{u}^{n,\theta} + \\
& \quad \rho (\textbf{u}_h^{n} \cdot \bigtriangledown)(E^{I,n,\theta}_{\textbf{u}}+E^{A,n,\theta}_{\textbf{u}})-\mu(c^n) \Delta(E^{I,n,\theta}_{\textbf{u}}+E^{A,n,\theta}_{\textbf{u}})+ \bigtriangledown E^{I,n,\theta}_p \\
& \quad + \bigtriangledown E^{A,n,\theta}_p ] \\
& = \frac{\rho \tau_1'}{dt-\rho \tau_1'} I_{d \times d}[ \{\rho \partial_t E^{I,n}_{\textbf{u}}+  \rho (E^{I,n}_{\textbf{u}} \bigtriangledown)\textbf{u}^{n,\theta}+ \rho (\textbf{u}_h^{n} \cdot \bigtriangledown)E^{I,n,\theta}_{\textbf{u}}-\mu(c^n)  \\
& \quad \Delta E^{I,n,\theta}_{\textbf{u}}+ \bigtriangledown E^{I,n,\theta}_p \} +  \{\rho \partial_t E^{A,n}_{\textbf{u}}+ \rho (E^{A,n}_{\textbf{u}} \bigtriangledown)\textbf{u}^{n,\theta}+\rho (\textbf{u}_h^{n} \cdot \bigtriangledown)E^{A,n,\theta}_{\textbf{u}}  \\
& \quad  -\mu(c^n) \Delta E^{A,n,\theta}_{\textbf{u}}+ \bigtriangledown E^{A,n,\theta}_p \}]
\end{split}
\end{equation}
Since $\frac{\rho \tau_1}{dt+ \rho \tau_1} < 1$, which implies $\frac{\rho \tau_1'}{dt} < 1$ and therefore the series $\sum_{i=1}^{\infty}(\frac{\rho \tau_1'}{dt})^i$ converges to $\frac{\rho \tau_1'}{dt-\rho \tau_1'}$. \vspace{1mm}\\
Similar to $\textbf{d}_1$, the other component $d_3$ is as follows:
\begin{equation}
\begin{split}
d_3 & \leq \frac{\tau_3'}{dt- \tau_3'} [  \partial_t E^{I,n}_{c}-\bigtriangledown \cdot \tilde{\bigtriangledown} E^{I,n,\theta}_{c} + (\textbf{u}_h^{n} \cdot \bigtriangledown) E^{I,n,\theta}_{c} + (E^{I,n}_{\textbf{u}} \cdot \bigtriangledown) c^{n,\theta}+\alpha E^{I,n,\theta}_{c} \\
& \quad  +\partial_t E^{A,n}_{c}-\bigtriangledown \cdot \tilde{\bigtriangledown} E^{A,n,\theta}_{c} + (\textbf{u}_h^{n} \cdot \bigtriangledown) E^{A,n,\theta}_{c} + (E^{A,n}_{\textbf{u}} \cdot \bigtriangledown)c^{n,\theta} +\alpha E^{A,n,\theta}_{c} ]
\end{split}
\end{equation}
It is clearly seen in the expansion of $\textbf{d}_1$ and $d_3$ that the terms in $I_3$ and $I_4$ exactly match with the terms in  $I_2$ and $I_1$ respectively. Hence their estimations also follow the same way as we have done earlier. Therefore skipping the repetition of presenting same results, here we have mentioned the estimated results only for one term from each of $I_3$ and $I_4$ in the following. Denoting first term of $I_3$ by the notation $I_3^{1}$ we have the estimated result as follows:
\begin{equation}
\begin{split}
I_3^1 & = \sum_{k=1}^{n_{el}} (\frac{\rho \tau_{1k}}{dt+\rho \tau_{1k}} I_{d \times d} \{\rho \partial_t E^{I,n}_{\textbf{u}}+  \rho (E^{I,n}_{\textbf{u}} \bigtriangledown)\textbf{u}^{n,\theta}+ \rho (\textbf{u}_h^{n} \cdot \bigtriangledown)E^{I,n,\theta}_{\textbf{u}}  \\
& \quad -\mu(c^n) \Delta E^{I,n,\theta}_{\textbf{u}}+ \bigtriangledown E^{I,n,\theta}_p \}, E^{A,n,\theta}_{\textbf{u}})_{\Omega_k}\\
& \leq  h^2 \frac{\rho^2 C C_{\tau_1}}{(T_0- \rho  C_{\tau_{1}})}[ \sum_{i=1}^d \{\sum_{k=1}^{n_{el}} B_{1k}^i \}(\|u_i^{n+1}\|_2+\|u_i^n\|_2)] +  \frac{ \rho \mid \tau_{1}\mid}{(T_0- \rho C_{\tau_1})} \sum_{i=1}^d (\sum_{k=1}^{n_{el}} B_{1k}^i) \\
& \quad  [\{\rho h^2 \|\frac{\partial u_i^{n, \theta}}{\partial x_i}\| +\rho h \|u_i^{n}\| +\rho h B^i_{1k} + \mu_u + h^3 (\frac{1+\theta}{2} \|u_i^{n+1}\|_2 + \frac{1-\theta}{2} \|u_i^n\|_2)\}\\
& \quad   (\frac{1+\theta}{2} \|u_i^{n+1}\|_2 + \frac{1-\theta}{2} \|u_i^n\|_2) +  h (\frac{1+\theta}{2} \|p^{n+1}\|_1 + \frac{1-\theta}{2} \|p^n\|_1)  ]
\end{split}
\end{equation}
and denoting first term of $I_4$ by $I_4^1$ we have the estimated result in the following
\begin{equation}
\begin{split}
I_4^1 & =   \sum_{k=1}^{n_{el}} (\frac{\rho \tau_{1k}^2}{dt+\rho \tau_{1k}} I_{d \times d} \{\rho \partial_t E^{I,n}_{\textbf{u}}+  \rho (E^{I,n}_{\textbf{u}} \cdot \bigtriangledown)\textbf{u}^{n,\theta}+ \rho (\textbf{u}_h^{n} \cdot \bigtriangledown)E^{I,n,\theta}_{\textbf{u}} -\mu(c^n)  \\
& \quad \Delta E^{I,n,\theta}_{\textbf{u}}+ \bigtriangledown E^{I,n,\theta}_p \},  \rho(\textbf{u}_h \cdot \bigtriangledown) E^{A,n,\theta}_{\textbf{u}} + \mu(c) \Delta E^{A,n,\theta}_{\textbf{u}} + \bigtriangledown E^{A,n,\theta}_{p})_{\Omega_k}\\
& \leq  h^2 \frac{\rho^2 C C_{\tau_1}^2}{(T_0-\rho C_{\tau_{1}})} (\sum_{k=1}^{n_{el}} D_{B_{1k}}) \{\|\textbf{u}^{n+1}\|_2+\|\textbf{u}^n\|_2 \}  +  \frac{ \rho \mid \tau_{1}\mid^2}{(T_0- \rho C_{\tau_1})} (\sum_{k=1}^{n_{el}} D_{B_{1k}}) [ \sum_{i=1}^d\\
& \quad  \{\rho h^2 \|\frac{\partial u_i^{n, \theta}}{\partial x_i}\| +\rho h \|u_i^{n}\| +\rho h B^i_{1k} + \mu_u + h^3 (\frac{1+\theta}{2} \|u_i^{n+1}\|_2 + \frac{1-\theta}{2} \|u_i^n\|_2)\}\\
& \quad   (\frac{1+\theta}{2} \|u_i^{n+1}\|_2 + \frac{1-\theta}{2} \|u_i^n\|_2) +  h (\frac{1+\theta}{2} \|p^{n+1}\|_1 + \frac{1-\theta}{2} \|p^n\|_1)  ]
\end{split}
\end{equation}
Now the estimations of the remaining terms are quite obvious. Therefore we directly add those results while combining them into (43) at last. Finally the last term containing truncation error can be estimated as follows:
\begin{equation}
\begin{split}
(\textbf{TE}^{n,\theta}, E^{A,n,\theta}_{\textbf{U}}) & =(\textbf{TE}_1^{n,\theta}, E^{A,n,\theta}_{\textbf{u}})+ (TE_2^{n,\theta}, E^{A,n,\theta}_{c})\\
& \leq \frac{ \epsilon_9}{2}( \|\textbf{TE}^{n,\theta}_1\|^2+ \|TE^{n,\theta}_2\|^2) + \frac{1}{2 \epsilon_9} (\|E^{A,n,\theta}_{\textbf{u}}\|^2 +\|E^{A,n,\theta}_{c}\|^2)
\end{split}
\end{equation}
This completes estimation of all the terms in the $RHS$ of (43).  Now we start with putting all the bounds, obtained for each of the terms in the right hand side of (43). Then we take out few common terms in the left hand side and consequently we have left with those terms multiplied by $h^2, \mid \tau_1 \mid$ and $ \mid \tau_3 \mid$. Now we multiply both sides by 2 and taking integration over $(t^n,t^{n+1})$ for $n$=0,1,...,$(N-1)$ to both the sides. Finally we have (43) as follows:
\begin{multline}
\{1- \frac{2 T C_{\tau_3}(1-C_{\tau_3})}{T_0+\rho C_{\tau_3}}\sum_{k=1}^{n_{el}}D_{B_{3k}}- \frac{4 T C_{\tau_3}}{T_0-\rho C_{\tau_3}} \sum_{k=1}^{n_{el}} B_{2k}\} \sum_{n=0}^{N-1} (\|E^{A,n+1}_{c}\|^2 -\|E^{A,n}_{c}\|^2)\\
 + \rho \{ 1- \frac{2(1+\rho C_{\tau_1}) T C_{\tau_1}}{T_0-\rho C_{\tau_1}} \sum_{k=1}^{n_{el}} D_{B_{1k}} - \frac{2(1+\rho)T C_{\tau_1}}{T_0-\rho C_{\tau_1}} \sum_{i=1}^d  \sum_{k=1}^{n_{el}} B_{1k}^i \} \sum_{n=0}^{N-1} (\|E^{A,n+1}_{\textbf{u}}\|^2\\
  -\|E^{A,n}_{\textbf{u}}\|^2)  +  \{ 2\mu_l - \frac{1}{\epsilon_1}- \frac{\mu_u}{\epsilon_2}- \frac{4 C_2}{\epsilon_8}- 4C_2' -2 C_{\tau_2}\}  \sum_{n=0}^{N-1} \int_{t^n}^{t^{n+1}} \mid E^{A,n,\theta}_{\textbf{u}} \mid_1^2 dt \\
   +\{2 \sigma - \frac{\sigma}{\epsilon_4}- \frac{2 C_2}{\epsilon_7}- \frac{4 C_2}{\epsilon_8} -4 C_2'-\frac{1}{\epsilon_9} \} \sum_{n=0}^{N-1} \int_{t^n}^{t^{n+1}} \| E^{A,n,\theta}_{\textbf{u}}\|^2 dt   \\
+\{2D_l- \frac{D_m}{\epsilon_2}- 4 \bar{C}_1^2\} \sum_{n=0}^{N-1} \int_{t^n}^{t^{n+1}} \mid E^{A,n,\theta}_c \mid_1^2 dt +\\
\end{multline}
\begin{multline}
  \{ 2 \alpha - \frac{\alpha}{\epsilon_5} - \frac{4 \bar{C}_1^2}{\epsilon_6}-\frac{1}{\epsilon_9} \} \sum_{n=0}^{N-1} \int_{t^n}^{t^{n+1}} \| E^{A,n,\theta}_c\|^2 dt \\
 \leq  h^2 \sum_{n=0}^{N-1} \int_{t^n}^{t^{n+1}}[ 2 C^2 \epsilon_1 (\frac{1+\theta}{2} \|p^{n+1}\|_2 +\frac{1-\theta}{2} \|p^{n}\|_2)^2 + \{ 2C^2 \mu_u \epsilon_2 + h^2 \sigma \epsilon_4 +  2C_2 \epsilon_7 \\
+3 C_2 h^2 \epsilon_8 + C_{\tau_2} \} (\frac{1+\theta}{2} \|\textbf{u}^{n+1}\|_2 +\frac{1-\theta}{2} \|\textbf{u}^{n}\|_2)^2 + \{ C^2 D_m \epsilon_3+ h^2 \alpha \epsilon_5 + \bar{C}_1^2 C^2 \epsilon_6 \}  \\
(\frac{1+\theta}{2} \|c^{n+1}\|_2 +\frac{1-\theta}{2}  \|c^{n}\|_2)^2 + 2\frac{\rho C C_{\tau_1}}{T_0- \rho C_{\tau_1}} \{ (1+ \rho C_{\tau_1}) (\sum_{k=1}^{n_{el}} D_{B_{1k}}) + (1+ \rho)  \\
(\sum_{i=1}^d \sum_{k=1}^{n_{el}}B_{1k}^i) \}(\|\textbf{u}^{n+1}\|_2 + \|\textbf{u}^n\|_2)  +\frac{2 C C_{\tau_3}}{T_0-  C_{\tau_3}} \{\sum_{k=1}^{n_{el}} D_{B_{3k}}+(1+C_{\tau_3})\sum_{k=1}^{n_{el}} B_{2k} \}\\
(\|c^{n+1}\|_2+\|c^n\|_2)] dt  + 2 \mid \tau_1 \mid \sum_{n=0}^{N-1} \int_{t^n}^{t^{n+1}} [ \frac{T C + \rho C_{\tau_1}}{T_0- \rho C_{\tau_1}} \sum_{k=1}^{n_{el}} D_{B_{1k}} + \frac{1+\rho}{T_0- \rho C_{\tau_1}} \sum_{i=1}^d \\
 \sum_{k=1}^{n_{el}} B_{1k}^i] [\{\rho h^2 \|\frac{\partial u_i^{n, \theta}}{\partial x_i}\| +\rho h \|u_i^{n,\epsilon\theta}\| +\rho h B^i_{1k} + \mu_u + h^3 (\frac{1+\theta}{2} \|u_i^{n+1}\|_2 + \frac{1-\theta}{2} \|u_i^n\|_2)\}\\
   (\frac{1+\theta}{2} \|u_i^{n+1}\|_2 + \frac{1-\theta}{2} \|u_i^n\|_2) +  h (\frac{1+\theta}{2} \|p^{n+1}\|_1 + \frac{1-\theta}{2} \|p^n\|_1)  ] dt + \\
 \mid \tau_1 \mid  \sum_{n=0}^{N-1} \int_{t^n}^{t^{n+1}} \frac{ 2T}{T_0-\rho C_{\tau_1}} [(1+\rho) \sum_{k=1}^{n_{el}} D_{B_{1k}} \bar{D}_{B_{1k}}+ (1+\rho C_{\tau_1}) \sum_{i=1}^d  \sum_{k=1}^{n_{el}}  \bar{D}_{B_{1k}} B_{1k}^i] dt \\
+2 \mid \tau_3 \mid \sum_{n=0}^{N-1} \int_{t^n}^{t^{n+1}} [ \frac{2C}{T_0- C_{\tau_3}} \sum_{k=1}^{n_{el}} D_{B_{3k}} + \frac{1+ C_{\tau_3}}{T_0- C_{\tau_3}} \sum_{k=1}^{n_{el}} B_{2k}] [ \sum_{i=1}^d \{h^2  \|\frac{\partial c^{n,\theta}}{\partial x_i}\|\\
  (\frac{1+\theta}{2}\|u_i^{n+1}\|_2+\frac{1-\theta}{2}\|u_i^n\|_2) + (D_{im}+h \bar{D}_{im}+ \mid \alpha\mid  h^2+ h\|u_i^{n,\theta}\| + hB_{1k}^i\\
+ h^3  (\frac{1+\theta}{2} \|u_i^{n+1}\|_2+\frac{1-\theta}{2}\|u_i^n\|_2))(\frac{1+\theta}{2}\|c^{n+1}\|_2+\frac{1-\theta}{2} \|c^n\|_2)\}] dt \\
+ \mid \tau_3 \mid  \sum_{n=0}^{N-1} \int_{t^n}^{t^{n+1}} \frac{2C}{T_0 - C_{\tau_3}} [\sum_{k=1}^{n_{el}} \bar{D}_{B_{3k}} D_{B_{3k}} + (1+ C_{\tau_3}) \sum_{k=1}^{n_{el}} \bar{D}_{B_{3k}} B_{2k}] dt \\
+  \epsilon_9 \sum_{n=0}^{N-1} \int_{t^n}^{t^{n+1}}( \|\textbf{TE}^{n,\theta}_1\|^2+ \|TE^{n,\theta}_2\|^2) dt \hspace{42mm}
\end{multline}
We can choose the values of the arbitrary parameters in such a manner that we can make all the coefficients in the left hand side positive. In order to satisfy such condition it is inevitable to choose the characteristic lengths small. Now after taking minimum of all the coefficients in left hand side, let us divide both the sides with that minimum, which turns out to be a positive real number. Applying assumption $\textbf{(iv)}$ it can be seen that $\|\textbf{u}^n\|_2$, $\|p^n\|_1$ and $\|c^n\|_2$ are bounded for $n=0,1,2,...,N$.  The initial conditions considered in section 3.1, imply $\|E^{A,0}_{\textbf{u}}\|=0$ and $\|E^{A,0}_c\|=0$. \vspace{1mm}\\
After performing all these intermediate steps and applying the properties (15)-(16) on truncation errors  we finally arrive at the following expression since $\tau_1$ and $\tau_3$ are of order $h^2$:
\begin{multline}
\|E^{A,N}_{\textbf{u}}\|^2 +\|E^{A,N}_{c}\|^2 +\sum_{n=0}^{N-1}  \int_{t^n}^{t^{n+1}}   \|E^{A,n,\theta}_{\textbf{u}} \|_1^2 dt+ \sum_{n=0}^{N-1}  \int_{t^n}^{t^{n+1}}  \|E^{A,n,\theta}_{c} \|^2_1 dt\\
 \leq C(T,\textbf{u},p,c) (h^2+dt^{2r}) \hspace{30mm}
\end{multline}
This implies
\begin{equation}
\boxed{\|E^{A}_{\textbf{u}}\|_{\tilde{\textbf{V}}}^2  + \|E^{A}_{c}\|_{\tilde{\textbf{V}}}^2 \leq C(T,\textbf{u},p,c) (h^2+dt^{2r})}
\end{equation}
where
\begin{equation}
    r=
    \begin{cases}
      1, & \text{if}\ \theta=1 \\
      2, & \text{if}\ \theta=0
    \end{cases}
  \end{equation}
We have used the fact that $\sum_{n=0}^{N-1} \int_{t^n}^{t^{n+1}} M dt \leq M T  $. This completes the first part of the proof. \vspace{2mm}\\
\textbf{Second part} Using this above result we are going to estimate auxiliary error part of pressure. We will use inf-sup condition to find estimate for $E_p^A$. Applying Galerkin orthogonality only for variational form of Navier-Stokes flow problem we have obtained \\
\begin{multline}
(\frac{\partial (\textbf{u}-\textbf{u}_h)}{\partial t}, \textbf{v}_{h})+ c(\textbf{u},\textbf{u},\textbf{v}_h)- c(\textbf{u}_h,\textbf{u}_h,\textbf{v}_h)+ 
 a_{NS} (\textbf{u}-\textbf{u}_h, \textbf{v}_h)- b(\textbf{v}_h, p-p_h)  =0 \\ 
 \end{multline}
Splitting of the errors implies the following 
\begin{multline}
b(\textbf{v}_h, p-I_hp)+ b(\textbf{v}_h, I_hp-p_h)  = (\partial_t E^A_{\textbf{u}}, \textbf{v}_{h})+ c(E^I_{\textbf{u}},\textbf{u},\textbf{v}_h)+c(E^A_{\textbf{u}},\textbf{u},\textbf{v}_h)+\\
 c(\textbf{u}_h,E^I_{\textbf{u}},\textbf{v}_h)+ c(\textbf{u}_h,E^A_{\textbf{u}},\textbf{v}_h)+ a_{NS}(E^I_\textbf{u}, \textbf{v}_h)+  a_{NS}(E^A_\textbf{u}, \textbf{v}_h)
\end{multline}
Without loss of generality considering the inclusion $\bigtriangledown \cdot V^h \subset Q^h$ and the property of the $L^2$ orthogonal projection of $I^h_p$ we have 
\begin{equation}
b(\textbf{v}_h, p-I_hp)= \int_{\Omega}(p-I_hp)(\bigtriangledown \cdot \textbf{v}_h)=0
\end{equation}
Now according to inf-sup condition we will have the following expression
\begin{equation}
\begin{split}
\|I_hp-p_h\|_{L^2(L^2)}^2& = \|E_p^A\|_{L^2(L^2)}^2 \\
& = \sum_{n=0}^{N-1} \int_{t^n}^{t^{n+1}} \|E_p^{A,n,\theta}\|^2 dt \\
& \leq  \sum_{n=0}^{N-1} \int_{t^n}^{t^{n+1}} \underset{\textbf{v}_h}{sup} \frac{b(\textbf{v}_h, E_p^{A,n,\theta})}{\|\textbf{v}_h\|_1} dt
\end{split}
\end{equation}
Using (74) on (73) we will have
\begin{equation}
\begin{split}
\sum_{n=0}^{N-1} \int_{t^n}^{t^{n+1}} b(\textbf{v}_h, E_p^{A,n,\theta}) dt & = \sum_{n=0}^{N-1} \int_{t^n}^{t^{n+1}} \{ (\frac{E^{A,n+1}_{\textbf{u}}-E^{A,n}_{\textbf{u}}}{dt},\textbf{v}_h) + a_{NS}(E^{I,n,\theta}_{\textbf{u}},\textbf{v}_h)\\
& \quad + c(E^{I,n}_{\textbf{u}},\textbf{u}^{n,\theta},\textbf{v}_h)+ c(E^{A,n}_{\textbf{u}},\textbf{u}^{n,\theta},\textbf{v}_h)+ \\
& \quad c(\textbf{u}_h^{n}, E^{I,n,\theta}_\textbf{u},\textbf{v}_h)+ c(\textbf{u}_h^{n}, E^{A,n,\theta}_\textbf{u},\textbf{v}_h)+ \\
& \quad  a_{NS}(E^{A,n,\theta}_{\textbf{u}},\textbf{v}_h)\}dt
\end{split}
\end{equation}
Now applying the results obtained in the previous part we will have,
\begin{equation}
\sum_{n=0}^{N-1} \int_{t^n}^{t^{n+1}} b(\textbf{v}_h, E_p^{A,n,\theta}) dt \leq C(T,\textbf{u},p,c)(h^2+ dt^{2r}) \|\textbf{v}_h\|_1
\end{equation}
Using this above result into (75), we will have the estimate for the pressure term
\begin{equation}
\|I_hp-p_h\|_{L^2(L^2)}^2 \leq C(T,\textbf{u},p,c) (h^2+dt^{2r})
\end{equation}
Now combining the results obtained in the first and second part we have finally arrived at the following auxiliary error estimate:
\begin{equation}
\|E^A_{\textbf{u}}\|^2_{\tilde{\textbf{V}}} + \|E^A_p\|_{L^2(L^2)}^2  + \|E^A_c\|^2_{\tilde{\textbf{V}}} \leq C(T,\textbf{u},p,c) (h^2+ dt^{2r})
\end{equation}
where
\begin{equation}
    r=
    \begin{cases}
      1, & \text{if}\ \theta=1 \\
      2, & \text{if}\ \theta=0
    \end{cases}
  \end{equation}
This completes the proof.
\end{proof} 
 
\begin{theorem}\textbf{(Apriori error estimate)}
Assuming the same condition as in the previous theorem, 
\begin{equation}
\|\textbf{u}-\textbf{u}_h\|_{\tilde{\textbf{V}}}^2+\|p-p_h\|_{L^2(L^2)}^2 + \|c-c_h\|^2_{\tilde{\textbf{V}}} \leq C' (h^2+ dt^{2r})
\end{equation}
where $C'$ depends on T, $\textbf{u}$,p,c and
\begin{equation}
    r=
    \begin{cases}
      1, & \text{if}\ \theta=1 \\
      2, & \text{if}\ \theta=0
    \end{cases}
  \end{equation}
\end{theorem}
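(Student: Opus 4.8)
The plan is to reduce Theorem~2 to the previous theorem by invoking the error splitting $e_{\textbf{u}}=E^I_{\textbf{u}}+E^A_{\textbf{u}}$, $e_p=E^I_p+E^A_p$, $e_c=E^I_c+E^A_c$ already set up in Section~4.1, and then to control the interpolation parts by the standard interpolation estimate and the auxiliary parts by the auxiliary error estimate just proved. Since $\|\cdot\|_{\tilde{\textbf{V}}}$ and $\|\cdot\|_{L^2(L^2)}$ are genuine norms, the triangle inequality together with $(a+b)^2\le 2a^2+2b^2$ gives, for instance,
\[
\|\textbf{u}-\textbf{u}_h\|_{\tilde{\textbf{V}}}^2 \le 2\|E^I_{\textbf{u}}\|_{\tilde{\textbf{V}}}^2 + 2\|E^A_{\textbf{u}}\|_{\tilde{\textbf{V}}}^2,
\]
and analogously $\|p-p_h\|_{L^2(L^2)}^2\le 2\|E^I_p\|_{L^2(L^2)}^2+2\|E^A_p\|_{L^2(L^2)}^2$ and $\|c-c_h\|_{\tilde{\textbf{V}}}^2\le 2\|E^I_c\|_{\tilde{\textbf{V}}}^2+2\|E^A_c\|_{\tilde{\textbf{V}}}^2$. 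Adding the three inequalities, the auxiliary contributions are bounded in one stroke by the previous theorem, so the only remaining task is to bound the three interpolation contributions.

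To bound these I would apply the interpolation estimate term by term in the definitions of the composite norms, with the interpolation degree chosen so that the bound holds with $m=1$; this is licensed by assumption~\textbf{(iv)}, which supplies $\textbf{u},c\in L^{\infty}(0,T;H^2)$ and $p\in L^{\infty}(0,T;H^1)$. For the velocity, $m=1$ yields $\|E^{I,n}_{\textbf{u}}\|\le Ch^2\|\textbf{u}^n\|_2$ (controlling the $\max_n$-in-time $L^2$ part of $\|\cdot\|_{\tilde{\textbf{V}}}$) and $\|E^{I,n,\theta}_{\textbf{u}}\|_1\le Ch\|\textbf{u}^{n,\theta}\|_2$ (controlling, after integration over each $(t^n,t^{n+1})$ and summation, the $L^2(H^1)$ part), so that $\|E^I_{\textbf{u}}\|_{\tilde{\textbf{V}}}^2\le Ch^2$, the seminorm term being dominant. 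The identical argument for $c$ gives $\|E^I_c\|_{\tilde{\textbf{V}}}^2\le Ch^2$, and for the pressure the $L^2$ projection $I^h_p$ with $p\in H^1$ gives $\|E^{I,n,\theta}_p\|\le Ch\|p^{n,\theta}\|_1$, whence $\|E^I_p\|_{L^2(L^2)}^2\le Ch^2$. Crucially, none of these interpolation bounds carries any $dt$ dependence, so they contribute purely to the $O(h^2)$ term.

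Combining everything, the interpolation parts supply the $h^2$ term while the previous theorem supplies $\|E^A_{\textbf{u}}\|_{\tilde{\textbf{V}}}^2+\|E^A_p\|_{L^2(L^2)}^2+\|E^A_c\|_{\tilde{\textbf{V}}}^2\le C(h^2+dt^{2r})$, which is the sole origin of the temporal term $dt^{2r}$ with the same $r$ as before. Collecting all constants into a single $C'$ depending on $T,\textbf{u},p,c$ then yields the stated estimate. I expect no genuine obstacle here beyond the auxiliary estimate itself: the only point needing care is matching the interpolation order to the available regularity so that both the $\max_n$-$L^2$ part and the $L^2(H^1)$ part of each $\tilde{\textbf{V}}$-norm are dominated by $h^2$ (first order in space forces $m=1$ and consumes the full $H^2$-regularity of $\textbf{u}$ and $c$), and checking that the $dt$-independent spatial bounds do not degrade the temporal rate inherited from the previous theorem.
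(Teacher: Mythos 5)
Your proposal is correct and follows essentially the same route as the paper: split each error into interpolation and auxiliary parts, apply the triangle inequality (with the factor absorbed into a constant), bound the auxiliary contributions by the previous theorem, and bound the interpolation contributions by the interpolation estimate using the regularity in assumption \textbf{(iv)}. The only difference is that you spell out the term-by-term interpolation bounds inside the $\tilde{\textbf{V}}$- and $L^2(L^2)$-norms, which the paper compresses into the phrase ``the interpolation inequalities.''
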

\begin{proof}
By applying triangle inequality, the interpolation inequalities and the result of the previous theorem we will have,
\begin{multline}
\|\textbf{u}-\textbf{u}_h\|_{\tilde{\textbf{V}}}^2+\|p-p_h\|_{L^2(L^2)}^2 +\|c-c_h\|^2_{\tilde{\textbf{V}}} \\
 = \|E^I_{\textbf{u}}+E^A_{\textbf{u}}\|_{\tilde{\textbf{V}}}^2 +  \|E^I_{p}+E^A_{p}\|_{L^2(L^2)}^2+ \|E^I_{c}+E^A_{c}\|_{\tilde{\textbf{V}}}^2  \hspace{31mm} \\
 \leq \bar{C} (\|E^I_{\textbf{u}}\|_{\tilde{\textbf{V}}}^2 +\|E^I_{p}\|_{L^2(L^2)}^2 +  \|E^I_{c}\|_{\tilde{\textbf{V}}}^2+ 
 \|E^A_{\textbf{u}}\|_{\tilde{\textbf{V}}}^2+ \|E^A_{p}\|_{L^2(L^2)}^2+\|E^A_{c}\|_{\tilde{\textbf{V}}}^2) \\
 \leq C'(T,\textbf{u},p,c)(h^2+ dt^{2r}) \hspace{65mm}
\end{multline}
This completes $apriori$ error estimation.
\end{proof} 

\subsection{Aposteriori error estimation}
In this section we are going to derive residual based aposteriori error estimation. This estimation is also comprised of two parts similar to the auxiliary apriori error estimate derived in the earlier section. \vspace{1mm} \\
\begin{theorem} 
For computed velocity $\textbf{u}_h$, pressure $p_h$ and concentration $c_h$ belonging to $(V^h)^d \times Q^h \times V^h$ satisfying (15)-(16), assume $dt$ is sufficiently small and positive, and sufficient regularity of exact solution in equations (1)-(2). Then there exists a constant $\bar{C}$, independent of $\textbf{u},p,c$ and depending on the residual such that
\begin{equation}
\|\textbf{u}-\textbf{u}_h\|^2_{\tilde{\textbf{V}}} + \|p-p_h\|_{L^2(L^2)}^2  + \|c-c_h\|^2_{\tilde{\textbf{V}}} \leq \bar{C}(\textbf{R}) (h^2+ dt^{2r})
\end{equation}
where $\textbf{R}$ is the residual vector and
\begin{equation}
    r=
    \begin{cases}
      1, & \text{if}\ \theta=1 \\
      2, & \text{if}\ \theta=0
    \end{cases}
  \end{equation}
\end{theorem}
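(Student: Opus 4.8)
The plan is to run the same two-part energy argument that proves Theorem 1, but to replace every solution-dependent quantity by the computable residual. First I introduce the weak residual: set $\langle\textbf{R}^{n,\theta},\textbf{V}\rangle:=L(\textbf{V})+(\textbf{TE}^{n,\theta},\textbf{V})-(M\tfrac{\textbf{U}_h^{n+1}-\textbf{U}_h^{n}}{dt},\textbf{V})-B(\textbf{u}_h^{n};\textbf{U}_h^{n,\theta},\textbf{V})$, whose strong-form density is $\textbf{R}=\textbf{F}-M\partial_t\textbf{U}_h-\mathcal{L}(\textbf{u}_h;\textbf{U}_h)$ with components $\textbf{R}_1=\textbf{f}-\rho\partial_t\textbf{u}_h-\rho(\textbf{u}_h\cdot\bigtriangledown)\textbf{u}_h+\mu(c_h)\Delta\textbf{u}_h-\bigtriangledown p_h$, $R_2=-\bigtriangledown\cdot\textbf{u}_h$ and $R_3=g-\partial_t c_h+\bigtriangledown\cdot\tilde{\bigtriangledown}c_h-\textbf{u}_h\cdot\bigtriangledown c_h-\alpha c_h$. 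Since the exact solution satisfies the time-discrete form (18) for every $\textbf{V}\in\textbf{V}_F$ (not merely for $\textbf{V}_h$), subtracting the computed solution gives the error identity $(M\tfrac{\textbf{e}^{n+1}-\textbf{e}^{n}}{dt},\textbf{V})+[B(\textbf{u}^{n};\textbf{U}^{n,\theta},\textbf{V})-B(\textbf{u}_h^{n};\textbf{U}_h^{n,\theta},\textbf{V})]=\langle\textbf{R}^{n,\theta},\textbf{V}\rangle$, valid for all continuous $\textbf{V}$, where $\textbf{e}=\textbf{U}-\textbf{U}_h$. The crucial structural fact, read off from (17), is the stabilized Galerkin orthogonality $\langle\textbf{R}^{n,\theta},\textbf{V}_h\rangle=B_{ASGS}-B$ evaluated at $\textbf{U}_h$, i.e.\ a sum of $\tau_k'$- and $\tau_k^{-1}\tau_k'$-weighted inner products of $\textbf{R}$ (and of the geometric series $\textbf{d}=\sum_i(\tfrac{1}{dt}M\tau_k')^i\textbf{R}$) with $\textbf{V}_h$ and $-\mathcal{L}^*(\textbf{u}_h;\textbf{V}_h)$; by Remark 1 no edge terms occur.

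First I would test the error identity with $\textbf{V}=\textbf{e}^{n,\theta}$. Result 2 turns the time-difference into the telescoping energy $\tfrac{\rho}{2dt}(\|e_{\textbf{u}}^{n+1}\|^2-\|e_{\textbf{u}}^{n}\|^2)+\tfrac{1}{2dt}(\|e_c^{n+1}\|^2-\|e_c^{n}\|^2)$, while coercivity of $a_{NS}$ with constant $\mu_l$, coercivity of $a_T$ with constants $D_l,\alpha$, and property (a), $c(\textbf{u}_h,e_{\textbf{u}},e_{\textbf{u}})=0$, furnish the coercive lower bound $\mu_l\|e_{\textbf{u}}^{n,\theta}\|_1^2+D_l|e_c^{n,\theta}|_1^2+\alpha\|e_c^{n,\theta}\|^2$. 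On the right I would split $\langle\textbf{R}^{n,\theta},\textbf{e}^{n,\theta}\rangle=\langle\textbf{R}^{n,\theta},\textbf{e}^{n,\theta}-I^h\textbf{e}^{n,\theta}\rangle+\langle\textbf{R}^{n,\theta},I^h\textbf{e}^{n,\theta}\rangle$: the first piece is estimated elementwise by Cauchy--Schwarz and the interpolation inequality $\|\textbf{e}^{n,\theta}-I^h\textbf{e}^{n,\theta}\|_{\Omega_k}\le C h_k\,|\textbf{e}^{n,\theta}|_{1,\Omega_k}$, producing a factor $h$ and leaving the local residual norms inside the constant; the second piece equals the stabilization block above and is $O(h^2)$ because $\tau_1,\tau_3$ are of order $h^2$. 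After Young's inequality absorbs the gradient factors into the coercive terms, summing over $n=0,\dots,N-1$ telescopes the energy with $\|e_{\textbf{u}}^{0}\|=\|e_c^{0}\|=0$, and the truncation term supplies $dt^{2r}$ through the bounds (15)--(16); this yields the velocity and concentration estimate in $\tilde{\textbf{V}}$-norm with a constant $\bar{C}(\textbf{R})$ that is a functional of the residual rather than of $\textbf{u},p,c$.

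The main obstacle is the two genuinely nonlinear couplings inside $B(\textbf{u}^{n};\cdot)-B(\textbf{u}_h^{n};\cdot)$. The convective defect I would split as $c(\textbf{u}^{n},\textbf{u}^{n,\theta},e_{\textbf{u}}^{n,\theta})-c(\textbf{u}_h^{n},\textbf{u}_h^{n,\theta},e_{\textbf{u}}^{n,\theta})=c(e_{\textbf{u}}^{n},\textbf{u}^{n,\theta},e_{\textbf{u}}^{n,\theta})+c(\textbf{u}_h^{n},e_{\textbf{u}}^{n,\theta},e_{\textbf{u}}^{n,\theta})$, whose second term vanishes by property (a) and whose first is controlled through the bounds (6), its $\|e_{\textbf{u}}\|_1$ factor absorbed into the viscous coercivity and the boundedness of $\textbf{u}$ from assumption (iv) sent into $\bar{C}(\textbf{R})$ (the transport convective defect $\int d_h(\textbf{u}^{n}\cdot\bigtriangledown c^{n,\theta}-\textbf{u}_h^{n}\cdot\bigtriangledown c_h^{n,\theta})$ splits and is treated identically). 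The viscosity defect is the harder one: writing $\mu(c^n)\bigtriangledown\textbf{u}^{n,\theta}:\bigtriangledown e_{\textbf{u}}^{n,\theta}-\mu(c_h^n)\bigtriangledown\textbf{u}_h^{n,\theta}:\bigtriangledown e_{\textbf{u}}^{n,\theta}=\mu(c_h^n)\bigtriangledown e_{\textbf{u}}^{n,\theta}:\bigtriangledown e_{\textbf{u}}^{n,\theta}+(\mu(c^n)-\mu(c_h^n))\bigtriangledown\textbf{u}^{n,\theta}:\bigtriangledown e_{\textbf{u}}^{n,\theta}$, the first term is coercive ($\ge\mu_l|e_{\textbf{u}}^{n,\theta}|_1^2$) but the second couples the velocity error to the concentration error $e_c^{n}$ through $\mu(c^n)-\mu(c_h^n)$. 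Using only $\mu\in C^0$ with $\mu_l\le\mu\le\mu_u$, I would bound $\bigtriangledown\textbf{u}^{n,\theta}$ uniformly via the $L^\infty(0,T;H^2)$ regularity in assumption (iv) and absorb the modulus of continuity of $\mu$ together with the concentration residual into $\bar{C}(\textbf{R})$, retaining only the error-linear, coercive part on the left. Choosing the Young parameters and $h$ small enough to keep all left-hand coefficients positive then closes this part.

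For the second part I recover the pressure exactly as in Theorem 1. Using the inf-sup condition for $b(\cdot,\cdot)$ I bound $\|e_p\|_{L^2(L^2)}^2\le\sum_{n}\int_{t^n}^{t^{n+1}}\sup_{\textbf{v}_h}\tfrac{b(\textbf{v}_h,e_p^{n,\theta})}{\|\textbf{v}_h\|_1}\,dt$, apply the momentum error identity and the inclusion $\bigtriangledown\cdot V^h\subset Q^h$ with the $L^2$-projection property to eliminate the interpolation pressure, and express $b(\textbf{v}_h,e_p^{n,\theta})$ through the velocity time-difference, the split convective terms, $a_{NS}(e_{\textbf{u}}^{n,\theta},\textbf{v}_h)$, and the momentum residual. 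Bounding each factor by the first-part velocity estimate gives $\|e_p\|_{L^2(L^2)}^2\le\bar{C}(\textbf{R})(h^2+dt^{2r})$, and adding the two parts delivers the claimed bound, with $r=1$ for $\theta=1$ and $r=2$ for $\theta=0$.
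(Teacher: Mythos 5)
Your plan reproduces the paper's architecture almost exactly: coercivity of $a_{NS}$ and $a_T$ together with Result 2 for the telescoping energy; splitting of the test error into an interpolation piece (bounded by $h$-weighted residual norms) and a finite-element piece (rewritten through the ASGS formulation as $\tau$-weighted residual terms, which are $O(h^2)$ since $\tau_1,\tau_3=O(h^2)$); property \textbf{(a)}/\textbf{(b)} for the convective defect; and the inf-sup argument for the pressure. The minor deviations are harmless or even slightly cleaner than the paper: your Cl\'ement-type bound $\|\textbf{e}^{n,\theta}-I^h\textbf{e}^{n,\theta}\|\le Ch\,|\textbf{e}^{n,\theta}|_1$ absorbed by Young's inequality replaces the paper's $\|E^I\|\le Ch^2\|\textbf{u}\|_2$ (and is more consistent with a residual-only constant), and your claim $\|\textbf{e}^0_{\textbf{u}}\|=\|e^0_c\|=0$ should really be $O(h^2)$, since $E^{I,0}\neq 0$ even though $E^{A,0}=0$.

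The genuine gap is the viscosity defect. You define the residual computably, with $\mu(c_h)\Delta\textbf{u}_h$ in $\textbf{R}_1$ and $\textbf{u}_h\cdot\bigtriangledown c_h$ in $R_3$, and are therefore left with the term $\int_\Omega(\mu(c^n)-\mu(c_h^n))\bigtriangledown\textbf{u}^{n,\theta}:\bigtriangledown e_{\textbf{u}}^{n,\theta}$ on the right-hand side. Your disposal of it --- ``absorb the modulus of continuity of $\mu$ together with the concentration residual into $\bar{C}(\textbf{R})$'' --- is not a valid step: $\mu(c^n)-\mu(c_h^n)$ is an \emph{error} quantity, not a residual quantity, so it cannot be shifted into a constant that is supposed to depend only on $\textbf{R}$. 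Under assumption \textbf{(i)} the viscosity is merely continuous and bounded, so there is no estimate of the form $|\mu(c^n)-\mu(c_h^n)|\le L\,|e_c^n|$; the only uniform bound available is $\mu_u-\mu_l$, and using it turns this term into an $O(1)$ contribution that destroys the rate $h^2+dt^{2r}$. Even if you strengthen \textbf{(i)} to Lipschitz continuity, the term is quadratic in the errors, of size $C\|e_c^{n}\|_1\,|e_{\textbf{u}}^{n,\theta}|_1$, and closing the argument then needs a discrete Gronwall step (and re-introduces exact-solution norms via $\bigtriangledown\textbf{u}^{n,\theta}$), none of which appears in your outline; the same issue, in milder form, affects your transport defect $\int_\Omega e_c\,(e_{\textbf{u}}\cdot\bigtriangledown c_h)$. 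The paper never meets this obstacle because it defines the residual with the \emph{exact} coefficients, $\textbf{R}_1=\textbf{f}-\{\rho\partial_t\textbf{u}_h+\rho(\textbf{u}_h\cdot\bigtriangledown)\textbf{u}_h-\mu(c)\Delta\textbf{u}_h+\bigtriangledown p_h\}$ and $R_3=g-(\partial_t c_h-\bigtriangledown\cdot\tilde{\bigtriangledown}c_h+\textbf{u}\cdot\bigtriangledown c_h+\alpha c_h)$, so that $a_{NS}(\textbf{u},\cdot)-a_{NS}(\textbf{u}_h,\cdot)=\int_\Omega\mu(c)\bigtriangledown e_{\textbf{u}}:\bigtriangledown(\cdot)$ and the transport difference collapse exactly onto these residuals with no defect term (at the price that the paper's ``residual'' contains $c$ and $\textbf{u}$ and is not fully computable). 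To repair your proof you must either adopt the paper's semi-exact residual, or assume $\mu$ Lipschitz and supply the Gronwall argument explicitly.
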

\begin{proof}
We estimate $aposteriori$ error by dividing the procedure into two parts. In the first part we find error bound corresponding to $velocity$ and $concentration$ followed by the second part estimating error associated with the $pressure$ term. Let us first introduce the residual vector corresponding to each equations 
\[
\textbf{R}=
  \begin{bmatrix}
 \textbf{f}-\{ \rho \frac{\partial \textbf{u}_h}{\partial t} + \rho (\textbf{u}_h \cdot \bigtriangledown) \textbf{u}_h - \mu(c) \Delta \textbf{u}_h  + \bigtriangledown p_h \} \\
    -\bigtriangledown \cdot \textbf{u}_h \\
 g-(\frac{\partial c_h}{\partial t} - \bigtriangledown \cdot \tilde{\bigtriangledown} c_h + \textbf{u} \cdot \bigtriangledown c_h + \alpha c_h )
  \end{bmatrix}
   = 
  \begin{bmatrix}
 \textbf{R}_1\\
  R_2 \\
  R_3
  \end{bmatrix}
\]
\textbf{First part}: We have $\forall \textbf{V} \in \textbf{V}_F$
\begin{equation}
\mu_l \mid \textbf{v} \mid_1^2 + D_{\alpha} \|d\|_1^2 \leq B(\textbf{u};\textbf{V},\textbf{V})= a_{NS}(\textbf{v},\textbf{v})+a_T(d,d)
\end{equation} 
Since $\textbf{e} \in \textbf{V}_F$ we substitute the errors $e_{\textbf{u}},e_c$ into the above relation:
\begin{equation}
\mu_l \|e_{\textbf{u}}\|_1^2+ D_{\alpha} \|e_c\|_1^2  \leq a_{NS}(e_\textbf{u},e_\textbf{u})+a_T(e_c,e_c)+ \mu_l \|e_{\textbf{u}}\|^2\\
\end{equation} 
By adding few terms in both sides the above equation becomes
\begin{multline}
 (\frac{\partial e_{\textbf{u}}}{\partial t},e_{\textbf{u}})+(\frac{\partial e_{c}}{\partial t},e_{c})+\mu_l \|e_{\textbf{u}}\|_1^2+  D_{\alpha} \|e_c\|_1^2 \\
 \leq   (\frac{\partial e_{\textbf{u}}}{\partial t},e_{\textbf{u}})+(\frac{\partial e_{c}}{\partial t},e_{c})+c(\textbf{u}, e_\textbf{u},e_\textbf{u})+a_{NS}(e_\textbf{u},e_\textbf{u})+a_T(e_c,e_c) \hspace{10mm} \\
 +b(e_\textbf{u},e_p)-b(e_\textbf{u},e_p)+ \mu_l \|e_{\textbf{u}}\|^2
\end{multline} 
Now first we will find a lower bound of $LHS$ and then upper bound for $RHS$ and finally combining them we will get $aposteriori$ error estimate. To find the lower bound the $LHS$ can be written as
\begin{multline}
LHS = (\frac{e^{n+1}_{\textbf{u}}-e^{n}_{\textbf{u}}}{dt},e_{\textbf{u}}^{n,\theta})+ (\frac{e_{c}^{n+1}-e_{c}^n}{dt},e_{c}^{n,\theta})+ 
\mu_l \|e_{\textbf{u}}^{n,\theta}\|_1^2 +    D_{\alpha} \|e_{c}^{n,\theta}\|^2_1
\end{multline}
Applying (32) on first two terms of $LHS$ we have the following relations
\begin{equation}
\begin{split}
(\frac{e_{\textbf{u}}^{n+1}-e_{\textbf{u}}^n}{dt},e_{\textbf{u}}^{n,\theta}) & \geq \frac{1}{2 dt}(\|e_{\textbf{u}}^{n+1}\|^2-\|e_{\textbf{u}}^n\|^2) \\
\end{split}
\end{equation}
and
\begin{equation}
\begin{split}
(\frac{e_{c}^{n+1}-e_{c}^n}{dt},e_{c}^{n,\theta}) & \geq \frac{1}{2 dt}(\|e_{c}^{n+1}\|^2-\|e_{c}^n\|^2) \\
\end{split}
\end{equation}
Hence
\begin{multline}
 \frac{1}{2 dt}(\|e_{\textbf{u}}^{n+1}\|^2-\|e_{\textbf{u}}^n\|^2)+\frac{1}{2 dt}(\|e_{c}^{n+1}\|^2-\|e_{c}^n\|^2)+\mu_l \|e_{\textbf{u}}^{n,\theta}\|_1^2 +   D_{\alpha}\|e_{c}^{n,\theta}\|^2_1\\
  \leq LHS \leq RHS
\end{multline}
Now our job is to find upper bound for $RHS$ and to reach at the desired estimates let us divide it into two broad parts by splitting errors in each of the terms in the following way:
\begin{equation}
\begin{split}
RHS &= [(\frac{e_{\textbf{u}}^{n+1}-e_{\textbf{u}}^n}{dt},E_{\textbf{u}}^{I,n,\theta}) +(\frac{e_c^{n+1}-e_c^n}{dt},E^{I,n,\theta}_c)+c(\textbf{u}^{n},e_\textbf{u}^{n,\theta}, E^{I,n,\theta}_\textbf{u}) +\\
& \quad a_{NS}(e^{n,\theta}_\textbf{u},E^{I,n,\theta}_\textbf{u})+ 
b(e^{n,\theta}_\textbf{u},E^{I,n,\theta}_{p})-  b(E^{I,n,\theta}_\textbf{u},e^{n,\theta}_p) +a_T(e^{n,\theta}_c,E^{I,n,\theta}_{c}) ] + 
\\
& \quad [(\frac{e_{\textbf{u}}^{n+1}-e_{\textbf{u}}^n}{dt},E_{\textbf{u}}^{A,n,\theta}) + (\frac{e^{n+1}_{c}-e_c^n}{ dt},E^{A,n,\theta}_{c})+c(\textbf{u}^{n},e_\textbf{u}^{n,\theta}, E^{A,n,\theta}_\textbf{u}) +
 \\
& \quad a_{NS}(e^{n,\theta}_\textbf{u},E^{A,n,\theta}_\textbf{u})+ b(e^{n,\theta}_\textbf{u} ,E^{A,n,\theta}_{p})-  b(E^{A,n,\theta}_\textbf{u},e^{n,\theta}_p) +a_T(e^{n,\theta}_c,E^{A,n,\theta}_{c})] \\
& \quad + \mu_l \|e_{\textbf{u}}^{n,\theta}\|^2 \\
&= RHS^I + RHS^A  + \mu_l \|e_{\textbf{u}}^{n,\theta}\|^2\\
\end{split}
\end{equation}
Our aim is to bring residual into context and for this purpose $RHS^I$ involving interpolation error terms can be written as follows:
\begin{equation}
\begin{split}
RHS^I & = [\rho (\frac{e_{\textbf{u}}^{n+1}-e_{\textbf{u}}^n}{dt},E^{I,n,\theta}_\textbf{u}) + (\frac{e_c^{n+1}-e_c^n}{dt}, E^{I,n,\theta}_c) + c(\textbf{u}^{n}, \textbf{u}^{n,\theta},E^{I,n,\theta}_\textbf{u})-\\
& \quad c(\textbf{u}^{n}_h, \textbf{u}^{n,\theta}_h,E^{I,n,\theta}_\textbf{u}) + a_{NS}(e^{n,\theta}_{\textbf{u}}, E^{I,n,\theta}_\textbf{u}) - b(E^{I,n,\theta}_{\textbf{u}},e^{n,\theta}_{p})+ b(e^{n,\theta}_{\textbf{u}},E^{I,n,\theta}_p) \\
& \quad + a_{T}(e_c^{n,\theta}, E^{I,n,\theta}_c) ]-c(e_\textbf{u}^{n}, \textbf{u}_h^{n,\theta}, E^{I,n,\theta}_\textbf{u})\\
& =RHS^I_1 - c(e_\textbf{u}^{n}, \textbf{u}_h^{n,\theta}, E^{I,n,\theta}_\textbf{u}) 
\end{split}
\end{equation}
The bracketed term in the above equation is denoted by $RHS^I_1$.  $RHS^A$ involving auxiliary part of error can also be decomposed in the similar manner as above and let us denote the alike term corresponding to $RHS^A$ by $RHS^A_1$. Therefore
\begin{equation}
RHS^A =RHS^A_1 - c(e_\textbf{u}^{n}, \textbf{u}_h^{n,\theta}, E^{A,n,\theta}_\textbf{u}) \\
\end{equation}
Hence combining these above two results (93) becomes
\begin{equation}
\begin{split}
RHS & = RHS^I_1 + RHS^A_1 - c(e_\textbf{u}^{n}, \textbf{u}_h^{n,\theta}, e_\textbf{u}^{n,\theta})+ \mu_l \|e_{\textbf{u}}^{n,\theta}\|^2\\
& =   RHS^I_1 + RHS^A_1 - c(e_\textbf{u}^{n}, \textbf{u}^{n,\theta}, e_\textbf{u}^{n,\theta}) + c(e_\textbf{u}^{n}, e_\textbf{u}^{n,\theta}, e_\textbf{u}^{n,\theta})+ \mu_l \|e_{\textbf{u}}^{n,\theta}\|^2\\
& =   RHS^I_1 + RHS^A_1 - c(e_\textbf{u}^{n}, \textbf{u}^{n,\theta}, e_\textbf{u}^{n,\theta})+ \mu_l \|e_{\textbf{u}}^{n,\theta}\|^2
\end{split}
\end{equation}
Property \textbf{(a)} of trilinear term $c(\cdot, \cdot, \cdot)$ implies $c(e_\textbf{u}^{n,\epsilon \theta}, e_\textbf{u}^{n,\theta}, e_\textbf{u}^{n,\theta})=0$. \\
In the most general way for $Navier$-$Stokes$ flow problem we have for all $\textbf{v} \in (V)^d$
\begin{multline}
\rho (\frac{e_{\textbf{u}}^{n+1}- e_{\textbf{u}}^{n}}{dt}, \textbf{v})+ c(\textbf{u}^{n}, \textbf{u}^{n,\theta}, \textbf{v})- c(\textbf{u}^{n}_h, \textbf{u}^{n,\theta}_h, \textbf{v})+ a_{NS}(e^{n,\theta}_{\textbf{u}},\textbf{v})-b(\textbf{v}, e_p^{n,\theta})\\
= \int_{\Omega} \textbf{R}_1^{n,\theta} \cdot \textbf{v} \hspace{90mm}\\
Similarly \hspace{2mm} \int_{\Omega} (\bigtriangledown \cdot e_\textbf{u}^{n,\theta})q = \int_{\Omega} R_2^{n,\theta} q \hspace{2mm} \forall q \in Q \hspace{47mm}\\
\int_{\Omega} (\frac{ e_c^{n+1}-e_c^{n}}{dt} d + \tilde{\bigtriangledown} e_c^{n,\theta} \cdot \bigtriangledown d + d \textbf{u}^{n} \cdot \bigtriangledown e_c^{n,\theta} + \alpha e_c^{n,\theta} d)= \int_{\Omega} R_3^{n,\theta} d \hspace{3 mm} \forall d \in V
\end{multline}
Now substituting $\textbf{v} ,q,d$ in the above expressions by $E^{I,n,\theta}_{\textbf{u}}, E^{I,n,\theta}_{p}, E^{I,n,\theta}_{c}$ respectively, we have the $RHS^I_1$ as,
\begin{equation}
\begin{split}
RHS^I_1 & = \int_{\Omega} (\textbf{R}_1^{n,\theta} \cdot E^{I,n,\theta}_{\textbf{u}}+ R_2^{n,\theta} E^{I,n,\theta}_{p}+ R_3^{n,\theta} E^{I,n,\theta}_{c} )\\
& \leq h^2 \{ \|\textbf{R}_1^{n,\theta}\| (\frac{1+\theta}{2} \| \textbf{u}^{n+1} \|_2 + \frac{1-\theta}{2} \| \textbf{u}^n \|_2) + C_2 \|R_2^{n,\theta}\| (\frac{1+\theta}{2} \|p^{n+1}\|_1  \\
& \quad  + \frac{1-\theta}{2} \|p^n\|_1) + \|R_3^{n,\theta}\|  (\frac{1+\theta}{2}\|c_{n+1}\|_2 +\frac{1-\theta}{2} \|c^n\|_2)\}\\
& \leq h^2( \bar{C}_1 \|\textbf{R}_1^{n,\theta}\| + \bar{C}_2 \|R_2^{n,\theta}\|+ \bar{C}_3 \|R_3^{n,\theta}\|) \\
\end{split}
\end{equation}
The parameters $\bar{C}_i$, for i=1,2,3,4, are coming from imposing assumption  \textbf{(iv)}. Now we are going to estimate of $RHS^A_1$. For that we employ $subgrid$ formulation (8). Subtracting (8) from the variational finite element formulation satisfied by the exact solution we have $\forall \textbf{V}_h \in \textbf{V}_F^h $
\begin{multline}
\rho (\frac{e_{\textbf{u}}^{n+1}-e_{\textbf{u}}^n}{dt}, \textbf{v}_h)+ (\frac{e_c^{n+1}-e_c^{n}}{dt},d_h) + c(\textbf{u}^{n},\textbf{u}^{n,\theta},\textbf{v}_h)- c(\textbf{u}^{n}_h,\textbf{u}^{n,\theta}_h,\textbf{v}_h) \\
+a_{NS}(e_{\textbf{u}}^{n,\theta},\textbf{v}_h)- b(\textbf{v}_h, e^{n,\theta}_p)+ b(e_{\textbf{u}}^{n,\theta}, q_h) + a_{T}(e_c^{n,\theta},d_h)\\
 = \sum_{k=1}^{n_{el}} \{(\tau_k'(\textbf{R}^{n,\theta}+\textbf{d}), -\mathcal{L}^* \textbf{V}_h)_{\Omega_k} - ((I-\tau_k^{-1}\tau_k) \textbf{R}^{n,\theta}, \textbf{V}_h)_{\Omega_k} + (\tau_k^{-1}\tau_k \textbf{d}, \textbf{V}_h)_{\Omega_k} \} \\
  +(\textbf{TE}^{n,\theta}_1, \textbf{v}_{h})+(TE^{n,\theta}_2,d_h)\\
\end{multline}
\begin{multline}
= \sum_{k=1}^{n_{el}} [( \tau_1' I_{d \times d} \{\textbf{R}_1^{n,\theta}+\textbf{d}_1 \}, \rho (\textbf{u}_h \cdot \bigtriangledown) \textbf{v}_{h}+\mu(c) \Delta \textbf{v}_{h}+ \bigtriangledown q_h)_{\Omega_k} + \tau_2'(R_2^{n,\theta}, \bigtriangledown \cdot \textbf{v}_h)_{\Omega_k} \\
\quad   + \tau_3'(R_3^{n,\theta}+d_3, \bigtriangledown \cdot \tilde{\bigtriangledown} d_h + \textbf{u}_h \cdot \bigtriangledown d_h - \alpha d_h)_{\Omega_k}+ ((1-\tau_1^{-1}\tau_1') I_{d \times d} \textbf{R}_1^{n,\theta},\textbf{v}_{h})_{\Omega_k}\\
 +( (1-\tau_3^{-1}\tau_3') R_3^{n,\theta},d_{h})_{\Omega_k} +(\tau_1^{-1}\tau_1' I_{d \times d} \textbf{d}_1,\textbf{v}_{h})_{\Omega_k}+\tau_3^{-1}\tau_3' (d_3,d_{h})_{\Omega_k}]  \\
+ (\textbf{TE}_1^{n,\theta},\textbf{v}_{h}) +(TE_2^{n,\theta},d_h) \hspace{55mm}
\end{multline} 
Now substituting $\textbf{V}_h$ by $(E^{A,n,\theta}_{\textbf{u}},  E^{A,n,\theta}_{p}, E^{A,n,\theta}_{c})$ in the above equation we have $RHS^A_1$ as follows 
\begin{equation}
\begin{split}
RHS^A_1 & = \sum_{k=1}^{n_{el}}[( \tau_1' I_{d \times d} \{\textbf{R}_1^{n,\theta}+\textbf{d}_1 \}, \rho (\textbf{u}_h \cdot \bigtriangledown) E^{A,n,\theta}_{\textbf{u}}+\mu(c) \Delta E^{A,n,\theta}_{\textbf{u}}+ \bigtriangledown E^{A,n,\theta}_{p})_{\Omega_k}  \\
& \quad + \tau_2'(R_2^{n,\theta}, \bigtriangledown \cdot E^{A,n,\theta}_{\textbf{u}})_{\Omega_k} + \tau_3'(R_3^{n,\theta}+d_4, \bigtriangledown \cdot \tilde{\bigtriangledown} E^{A,n,\theta}_{c} + \textbf{u}_h \cdot \bigtriangledown E^{A,n,\theta}_{c} \\
& \quad - \alpha E^{A,n,\theta}_{c})_{\Omega_k} + ((1-\tau_1^{-1}\tau_1') I_{d \times d} \textbf{R}_1^{n,\theta}, E^{A,n,\theta}_{\textbf{u}})_{\Omega_k} + (1-\tau_3^{-1}\tau_3') (R_3^{n,\theta}, \\
& \quad E^{A,n,\theta}_{c})_{\Omega_k}+(\tau_1^{-1}\tau_1'I_{d \times d}\textbf{d}_1,E^{A,n,\theta}_{\textbf{u}})_{\Omega_k}+\tau_3^{-1}\tau_3' (d_3,E^{A,n,\theta}_{c})_{\Omega_k}] + \\
& \quad (\textbf{TE}_1^{n,\theta},E^{A,n,\theta}_{\textbf{u}})  +(TE_2^{n,\theta},E^{A,n,\theta}_{c})
\end{split}
\end{equation}
Now we estimate each term separately. We use the results mentioned earlier during $apriori$ error estimation.
\begin{equation}
\begin{split}
 \sum_{k=1}^{n_{el}}( \tau_1' I_{d \times d} \textbf{R}_1^{n,\theta}, \rho (\textbf{u}_h \cdot \bigtriangledown) E^{A,n,\theta}_{\textbf{u}}+\mu(c) \Delta E^{A,n,\theta}_{\textbf{u}}+ \bigtriangledown E^{A,n,\theta}_{p})_{\Omega_k} \\
  \leq \frac{\mid \tau_1 \mid T}{T_0- \rho C_{\tau_1}} (\sum_{k=1}^{n_{el}} D_{B_{1k}})\|\textbf{R}_1^{n,\theta}\| \hspace{49mm} \\
\sum_{k=1}^{n_{el}} \tau_3'(R_3^{n,\theta},\bigtriangledown \cdot \tilde{\bigtriangledown} E^{A,n,\theta}_{c} + \textbf{u}_h \cdot \bigtriangledown E^{A,n,\theta}_{c} - \alpha E^{A,n,\theta}_{c} )_k \hspace{14mm} \\
 \leq \frac{\mid \tau_3 \mid T}{T_0-  C_{\tau_3}} (\sum_{k=1}^{n_{el}} D_{B_{3k}}) \|R_3^{n,\theta}\| \hspace{50mm}\\ 
\end{split}
\end{equation}
and the other set of terms can be estimated as follows:
\begin{equation}
\begin{split}
\sum_{k=1}^{n_{el}}((1-\tau_1^{-1} \tau_1')I_{d \times d} \textbf{R}_1^{n,\theta}, E^{A,n,\theta}_{\textbf{u}})_{\Omega_k} & \leq \frac{\rho \mid \tau_1 \mid}{T_0-\rho C_{\tau_1}} (\sum_{i=1}^d \sum_{k=1}^{n_{el}}B_{1k}^i) \|\textbf{R}_1^{n,\theta}\| \\
\sum_{k=1}^{n_{el}}(1-\tau_3^{-1} \tau_3')(R_3^{n,\theta}, E^{A,n,\theta}_{c})_{\Omega_k} & \leq \frac{ \mid \tau_3 \mid}{T_0- C_{\tau_3}} (\sum_{k=1}^{n_{el}}B_{2k}) \|R_3^{n,\theta}\| \\
\end{split}
\end{equation} 
let us look into the form of the column vector $\textbf{d}$ which has components $\textbf{d}_1,d_2,d_3$. \vspace{1mm} \\
$\textbf{d}$= $\sum_{i=1}^{n+1}(\frac{1}{dt}M\tau_k')^i(\textbf{F} -M\partial_t \textbf{U}_h - \mathcal{L}\textbf{U}_h)=\sum_{i=1}^{n+1}(\frac{1}{dt}M\tau_k')^i \textbf{R}$ \vspace{2mm}\\
Hence we have the components $\textbf{d}_1= (\sum_{i=1}^{n+1}(\frac{1}{dt} \tau_1')^i) I_{d \times d} \textbf{R}_1^{n,\theta}$, $d_2=0$ and $d_3=(\sum_{i=1}^{n+1}(\frac{1}{dt} \tau_3')^i) R_3^{n,\theta}$ \vspace{2mm} \\ 
Now the terms containing the components of $\textbf{d}$ can be estimated in the following way:
\begin{multline}
\sum_{k=1}^{n_{el}} (\tau_1' I_{d \times d} \textbf{d}_1,\rho (\textbf{u}_h \cdot \bigtriangledown) E^{A,n,\theta}_{\textbf{u}}+\mu(c) \Delta E^{A,n,\theta}_{\textbf{u}}+ \bigtriangledown E^{A,n,\theta}_{p})_{\Omega_k} \\
 = \sum_{k=1}^{n_{el}}( \tau_1' \{\sum_{i=1}^{n+1}(\frac{1}{dt} \tau_1')^i\} I_{d \times d}\textbf{R}_1^{n,\theta},  \rho (\textbf{u}_h \cdot \bigtriangledown) E^{A,n,\theta}_{\textbf{u}}+\mu(c) \Delta E^{A,n,\theta}_{\textbf{u}}+ \bigtriangledown E^{A,n,\theta}_{p})_{\Omega_k}\\
 \leq \sum_{k=1}^{n_{el}}(\tau_1' \{\sum_{i=1}^{\infty}(\frac{1}{dt} \tau_1')^i\} I_{d \times d} \textbf{R}_1^{n,\theta}, \rho (\textbf{u}_h \cdot \bigtriangledown) E^{A,n,\theta}_{\textbf{u}}+\mu(c) \Delta E^{A,n,\theta}_{\textbf{u}}+ \bigtriangledown E^{A,n,\theta}_{p})_{\Omega_k}\\
 = \sum_{k=1}^{n_{el}}( \frac{\rho \tau_1^2}{dt+\rho \tau_1} I_{d \times d} \textbf{R}_1^{n,\theta},  \rho (\textbf{u}_h \cdot \bigtriangledown) E^{A,n,\theta}_{\textbf{u}}+\mu(c) \Delta E^{A,n,\theta}_{\textbf{u}}+ \bigtriangledown E^{A,n,\theta}_{p})_{\Omega_k} \hspace{8mm} \\
 \leq \frac{\mid \tau_1 \mid}{T_0-\rho C_{\tau_1}} \rho C_{\tau_1} (\sum_{k=1}^{n_{el}} D_{B_{1k}}) \|\textbf{R}_1^{n,\theta}\| \hspace{53mm}
\end{multline} 
Similarly the next few terms will follow the same way as above.
\begin{multline}
 \sum_{k=1}^{n_{el}} \tau_3'(d_3,\bigtriangledown \cdot \tilde{\bigtriangledown} E^{A,n,\theta}_{c} + \textbf{u}_h \cdot \bigtriangledown E^{A,n,\theta}_{c} - \alpha E^{A,n,\theta}_{c} )_k \hspace{35mm}\\
 \leq \frac{\mid \tau_3 \mid}{T_0- C_{\tau_3}}  C_{\tau_3} (\sum_{k=1}^{n_{el}} D_{B_{3k}}) \|R_3^{n,\theta}\| \hspace{63mm} \\
 \sum_{k=1}^{n_{el}} (\tau_1^{-1}\tau_1' I_{d \times d} \textbf{d}_1,E^{A,n,\theta}_{\textbf{u}})_{\Omega_k} \leq \frac{\rho \mid \tau_1 \mid}{T_0- \rho C_{\tau_1}} (\sum_{i=1}^d \sum_{k=1}^{n_{el}} B_{1k}^i) \|\textbf{R}_1^{n,\theta}\| \hspace{20mm} \\
 \sum_{k=1}^{n_{el}} \tau_3^{-1}\tau_3'(d_3,E^{A,n,\theta}_{c})_{\Omega_k} \leq \frac{ \mid \tau_3 \mid}{T_0-  C_{\tau_3}} (\sum_{k=1}^{n_{el}} B_{2k}) \|R_3^{n,\theta} \| \hspace{19mm}
 \end{multline} 
The terms containing truncation errors already have been estimated earlier during apriori error estimation. Now we estimate the remaining terms as follows:
\begin{equation}
\begin{split} 
& \sum_{k=1}^{n_{el}} \tau_2'(R_3^{h,n,\theta}, \bigtriangledown \cdot E^{A,n,\theta}_{\textbf{u}})_{\Omega_k}  \\
& =  \sum_{k=1}^{n_{el}} \tau_2' (R_3^{h,n,\theta}, \bigtriangledown \cdot e^{n,\theta}_{\textbf{u}})_{\Omega_k} -  \sum_{k=1}^{n_{el}} \tau_2' (R_3^{h,n,\theta}, \bigtriangledown \cdot E^{I,n,\theta}_{\textbf{u}})_{\Omega_k}  \\
& =  \sum_{k=1}^{n_{el}} \tau_2'  (\bigtriangledown \cdot e^{n,\theta}_{\textbf{u}}, \bigtriangledown \cdot e^{n,\theta}_{\textbf{u}})_{\Omega_k} -  \sum_{k=1}^{n_{el}} \tau_2' ( \bigtriangledown \cdot e^{n,\theta}_{\textbf{u}}, \bigtriangledown \cdot E^{I,n,\theta}_{\textbf{u}})_{\Omega_k}  \\
 \end{split}
\end{equation}
\begin{equation}
\begin{split} 
& \leq \sum_{k=1}^{n_{el}} \tau_2 \int_{\Omega_k} ( \bigtriangledown \cdot e^{n,\theta}_{\textbf{u}})^2 +  \sum_{k=1}^{n_{el}} \tau_2 \int_{\Omega_k} \mid  (\bigtriangledown \cdot e^{n,\theta}_{\textbf{u}} ) (\bigtriangledown \cdot E^{I,n,\theta}_{\textbf{u}})  \mid  \\ 
& \leq  C_{\tau_2} (\sum_{i=1}^d \|\frac{\partial e^{n,\theta}_{ui}}{\partial x_i}\|)^2 +  C_{\tau_2} (\sum_{i=1}^d \|\frac{\partial e^{n,\theta}_{ui}}{\partial x_i}\|) (\sum_{i=1}^d \|\frac{\partial E^{I,n,\theta}_{ui}}{\partial x_i}\| )  \\
 & \leq 2 C_{\tau_2} \sum_{i=1}^d \|\frac{\partial e^{n,\theta}_{ui}}{\partial x_i}\|^2 + \epsilon_1' C_{\tau_2} \sum_{i=1}^d \|\frac{\partial e^{n,\theta}_{ui}}{\partial x_i}\|^2  +  \frac{C_{\tau_2}}{\epsilon_1'} \sum_{i=1}^d \|\frac{\partial E^{I,n,\theta}_{ui}}{\partial x_i}\|^2   \\
& \leq C_{\tau_2}[ (2+ \epsilon_1')  \sum_{i=1}^d \| e^{n,\theta}_{ui}\|_1^2  + \frac{h^2}{\epsilon_1'} \sum_{i=1}^d (\frac{1+\theta}{2} \|u_i^{n+1}\|_2+ \frac{1-\theta}{2} \|u_i^{n}\|_2)^2 ]  \\
& \leq   (2+ \epsilon_1') C_{\tau_2} \| e^{n,\theta}_{\textbf{u}}\|_1^2 + h^2 \frac{C_{\tau_2}}{\epsilon_1'} \bar{C}_5 
 \end{split}
\end{equation}
where the parameter $\bar{C}_5$ comes for applying assumption $\textbf{(iv)}$. Now the terms involving trancation error can be estimated in slightly different way as we have done in the previous section. Let us present here a detailed derivation of one term only and the other follows the same way.
\begin{equation}
\begin{split}
(TE_2^{n,\theta}, E^{A,n,\theta}_{c}) & = (TE_2^{n,\theta}, e^{n,\theta}_c) - (TE_2^{n,\theta}, E^{I,n,\theta}_c) \\
& \leq \mid (TE_2^{n,\theta}, e^{n,\theta}_c) \mid + \mid (TE_2^{n,\theta}, E^{I,n,\theta}_c) \mid \\
& \leq \|TE_2^{n,\theta} \| (\|e^{n,\theta}_c\| + \|E^{I,n,\theta}_c \|)\\
& \leq \frac{1}{\epsilon_2'} \|TE_2^{n,\theta}\|^2 + \frac{\epsilon_2'}{2} (\|e^{n,\theta}_c\|^2 + \|E^{I,n,\theta}_c \|^2) \\
& \leq  \frac{1}{\epsilon_2'} \|TE_2^{n,\theta}\|^2 +  \frac{\epsilon_2'}{2} \{ \|e^{n,\theta}_c\|^2 + h^4 (\frac{1+\theta}{2} \|c^{n+1}\|_2 + \frac{1-\theta}{2} \|c^n\|_2)^2 \} \\
& \leq \frac{1}{\epsilon_2'} \|TE_2^{n,\theta}\|^2 +  \frac{\epsilon_2'}{2} \|e^{n,\theta}_c\|^2_1 +  h^4 \frac{\epsilon_2'}{2} \bar{C}_6
\end{split}
\end{equation}
Similarly the estimated result for the remaining term is
\begin{equation}
(\textbf{TE}_1^{n,\theta}, E^{A,n,\theta}_{\textbf{u}}) \leq  \frac{1}{\epsilon_2'} \|\textbf{TE}_1^{n,\theta}\|^2 +  \frac{\epsilon_2'}{2} \|e^{n,\theta}_{\textbf{u}}\|^2_1 +  h^4 \frac{\epsilon_2'}{2} \bar{C}_7
\end{equation}
 and this completes estimating all the terms of $RHS^A_1$. Now the $trilinear$ term in $RHS$ in (96) can be estimated as follows using property \textbf{(b)} of the $trilinear$ term as follows:
\begin{equation}
\begin{split}
c(e_\textbf{u}^{n},\textbf{u}^{n,\theta}, e_\textbf{u}^{n,\theta}) & \leq C \|e_\textbf{u}^{n}\| \|\textbf{u}^{n,\theta}\|_2 \|e_\textbf{u}^{n,\theta}\|_1\\
& \leq \bar{C}_8 \|e_\textbf{u}^{n}\| \|e_\textbf{u}^{n,\theta}\|_1\\
& \leq  \bar{C}_8 \|e_\textbf{u}^{n,\theta}\|^2
\end{split}
\end{equation}
The term $\|\textbf{u}^{n,\theta}\|_2$ is bounded by the virtue of assumption \textbf{(iv)}
and applying $Poincare$ inequality on last term in $RHS$ in (96) we have
\begin{equation}
\mu_l \|e_{\textbf{u}}^{n,\theta}\|^2 \leq \mu_l C_P \mid e_{\textbf{u}}^{n,\theta} \mid_1^2 \leq \mu_l C_P \|e_{\textbf{u}}^{n,\theta}\|^2
\end{equation}
Now this completes finding bounds for each term in the $RHS$ of (93). Putting common terms all together in the left hand side and multiplying them by $2$ and  then integrating both sides over $(t^n,t^{n+1})$ for $n=0,...,(N-1)$ , we will finally have 
\begin{multline}
\sum_{n=0}^{N-1} ( \|e^{n+1}_{\textbf{u}}\|^2-\|e^{n}_{\textbf{u}}\|^2)+\sum_{n=0}^{N-1} ( \|e^{n+1}_c\|^2-\|e^{n}_c\|^2) + \{ 2\mu_l-  2(2+\epsilon_1') C_{\tau_2}  \\
\quad - 2 C_8 - \mu_l C_P -\epsilon_2' \} \sum_{n=0}^{N-1} \int_{t^n}^{t^{n+1}} \|e^{n,\theta}_{\textbf{u}}\|_1^2 dt +(2 D_{\alpha}- \epsilon_2') \sum_{n=0}^{N-1} \int_{t^n}^{t^{n+1}} \|e^{n,\theta}_c\|^2_1 dt \\
\leq h^2 \sum_{n=0}^{N-1} \int_{t^n}^{t^{n+1}} \{ \bar{C}_1 \|\textbf{R}_1^{n,\theta}\| + \bar{C}_2 \|R_2^{n,\theta}\|+ \bar{C}_3 \|R_3^{n,\theta}\|+ \frac{2 C_{\tau_2}}{\epsilon_1'} \bar{C}_5 + h^2 \epsilon_2' (\bar{C}_6 + \bar{C}_7) \}dt \\
\quad  
+ 2 \frac{\mid \tau_1 \mid }{T_0- \rho C_{\tau_1}} [ (T+ \rho C_{\tau_1}) (\sum_{k=1}^{n_{el}} D_{B_{1k}}) + 2 \rho \sum_{i=1}^d \sum_{k=1}^{n_{el}} B_{1k}^i ]  \sum_{n=0}^{N-1} \int_{t^n}^{t^{n+1}} \|\textbf{R}_1^{n,\theta}\|^2 dt \\
+ 2 \frac{\mid \tau_3 \mid }{T_0- \rho C_{\tau_3}}[(T+ C_{\tau_3}) \sum_{k=1}^{n_{el}}D_{B_{3k}}+ 2 \sum_{k=1}^{n_{el}} B_{2k}] \sum_{n=0}^{N-1} \int_{t^n}^{t^{n+1}} \|R_3^{n,\theta}\|^2 dt +\\
\frac{2}{\epsilon_2'} \sum_{n=0}^{N-1} \int_{t^n}^{t^{n+1}}( \|\textbf{TE}^{n,\theta}_1\|^2+ \|TE^{n,\theta}_2\|^2) dt \hspace{20mm}
\end{multline}
Choose the arbitrary parameters including $C_{\tau_2}$ and the $Poincare$ constant $C_P$ in such a way that all the coefficients in the left hand side can be made positive.
Then taking minimum over the coefficients in the left hand side let us divide both sides by them. Using properties (15)-(16) associated with both implicit time discretisation scheme  and the fact that $\tau_1, \tau_3$ are of order $h^2$, we have arrived at the following relation:
\begin{equation}
\boxed{\|\textbf{u}-\textbf{u}_h\|_{\tilde{\textbf{V}}}^2  + \|c-c_h\|_{\tilde{\textbf{V}}}^2 \leq C'(\textbf{R}) (h^2+dt^{2r})}
\end{equation}
where
\begin{equation}
    r=
    \begin{cases}
      1, & \text{if}\ \theta=1 \hspace{1mm} for  \hspace{1mm} backward  \hspace{1mm} Euler  \hspace{1mm} rule \\
      2, & \text{if}\ \theta=0  \hspace{1mm} for  \hspace{1mm} Crank-Nicolson  \hspace{1mm} scheme
    \end{cases}
  \end{equation}
  This only completes one part of $aposteriori$ estimation and in the next part we combine the corresponding pressure part.\vspace{2mm}\\
\textbf{Second part}: Using the result (74) we can rewrite (73) in the following form:
\begin{equation}
b(\textbf{v}_h, I_hp-p_h)  = (\frac{\partial e_{\textbf{u}}}{\partial t}, \textbf{v}_{h})+  c(e_{\textbf{u}},\textbf{u},\textbf{v}_h)+
 c(\textbf{u}_h, e_{\textbf{u}},\textbf{v}_h)+ a_{NS}(e_{\textbf{u}},\textbf{v}_h)
\end{equation}
Integrating both sides with respect time
\begin{equation}
\begin{split}
\sum_{n=0}^{N-1} \int_{t^n}^{t^{n+1}} b(\textbf{v}_h, E_p^{A,n,\theta}) dt & = \sum_{n=0}^{N-1} \int_{t^n}^{t^{n+1}} \{ (\frac{e_{\textbf{u}}^{n+1}-e_{\textbf{u}}^n}{dt},\textbf{v}_{h})+ c(e_\textbf{u}^{n}, \textbf{u}^{n,\theta},\textbf{v}_h) +\\
& \quad  c(\textbf{u}_h^{n},e_{\textbf{u}}^{n,\theta}, \textbf{v}_h)+ a_{NS}(e_{\textbf{u}}^{n,\theta},\textbf{v}_h) \}dt\\
& = \sum_{n=0}^{N-1} \int_{t^n}^{t^{n+1}} \{ (\frac{e_{\textbf{u}}^{n+1}-e_{\textbf{u}}^n}{dt},\textbf{v}_{h})+ c(e_\textbf{u}^{n}, \textbf{u}^{n,\theta},\textbf{v}_h) +\\
& \quad  c(\textbf{u}^{n}, \textbf{u}^{n,\theta},\textbf{v}_h)- c(e_\textbf{u}^{n}, e_\textbf{u}^{n,\theta},\textbf{v}_h)+  a_{NS}(e_{\textbf{u}}^{n,\theta},\textbf{v}_h)\}dt
\end{split}
\end{equation} 
Now applying $Cauchy-Schwarz$'s inequality, $Young$'s inequality, property \textbf{(b)} of the $trilinear$ form $c(\cdot, \cdot, \cdot)$ and the above result (113) on (116) we have
\begin{equation}
\sum_{n=0}^{N-1} \int_{t^n}^{t^{n+1}} b(\textbf{v}_h, E_p^{A,n,\theta}) dt \leq \bar{C}'(\textbf{R})(h^2+ dt^{2r}) \|\textbf{v}_h\|_1
\end{equation}
Applying this result on (75) we have
\begin{equation}
\|I_hp-p_h\|^2_{L^2(L^2)} \leq \bar{C}''(\textbf{R})(h^2+ dt^{2r})
\end{equation} 
Now combining the results obtained in the first and second part and applying interpolation estimate on pressure interpolation term $E^I_p$, we finally arrive at the following
\begin{equation}
\boxed{\|\textbf{u}-\textbf{u}_h\|^2_{\tilde{\textbf{V}}} + \|p-p_h\|_{L^2(L^2)}^2  + \|c-c_h\|^2_{\tilde{\textbf{V}}} \leq \bar{C}(\textbf{R}) (h^2+ dt^{2r})}
\end{equation} 
Now this finally completes derivation of $aposteriori$ error estimation. 
\end{proof}
\begin{remark}
These estimations clearly imply that the scheme is $first$ order convergent in space with respect to total norm, whereas in time it is $first$ order convergent for backward Euler time discretization scheme and $second$ order convergent for Crank-Nicolson method.
\end{remark}

\section{Numerical Experiment}
In this section we verify the credibility of $ASGS$ method for this coupled transient $Navier$-$Stokes$-$VADR$ model through several numerical examples. Here we present a comparative study between standard Galerkin and $ASGS$ finite element method. We have considered two broad cases based on one way coupling and two-way or strong coupling. First case is further divided into three sub-cases consisting of different values Reynolds number and in the later one the viscosity of the fluid is taken to be dependent upon concentration of the solute and variable diffusion coefficients have been considered. This case too consists of two sub-cases involving different viscosity coefficients. \vspace{1mm}\\
Let us take $\Omega$ to be a square bounded domain (0,1) $\times$ (0,1).   Piecewise continuous linear finite element(P1) space is considered for approximating velocity, pressure and concentration. Now renaming the error in the following way we have examined the performances of both Galerkin and $ASGS$ methods.  
\begin{center}
$Total$ $error$ =  $\{\|\textbf{u}-\textbf{u}_h\|^2_{\tilde{\textbf{V}}} + \|p-p_h\|_{L^2(L^2)}^2  + \|c-c_h\|^2_{\tilde{\textbf{V}}}\}^{\frac{1}{2}}$
\end{center}
 The exact solutions for all the cases are taken as follows: \vspace{1mm}\\
$\textbf{u}=(e^{-t} x^2(x-1)^2y(y-1)(2y-1), -e^{-t}x(x-1)(2x-1)y^2(y-1)^2 )$, \vspace{1mm} \\
 $p=e^{-t}(3x^2+3y^2-2)$ and $c=e^{-t} x y (x-1)(y-1)$ \vspace{1mm} \\
 
 \begin{table}[]
\centering
\begin{tabular}{| *{6}{c|} }
    \hline
 Time &  Grid     & \multicolumn{2}{c|}{Galerkin method}
            & \multicolumn{2}{c|}{ASGS method}\\
      \hline       
  step & size     & Total error & RoC & Total error & RoC \\
 \hline
0.1& 10 $\times$ 10 & 0.158556 &  & 0.158435 & \\
   \hline
 0.05&    20 $\times$ 20 & 0.0833 & 0.928605  & 0.0833011 & 0.927481\\  
     \hline
  0.025&   40 $\times$ 40 & 0.0430609 & 0.95194  & 0.0430864 & 0.951103\\   
    \hline         
 0.0125&   80 $\times$ 80 & 0.0219347 & 0.973161  & 0.0219556 & 0.972645\\  
     \hline         
 0.00625&   160 $\times$ 160 & 0.0110526 & 0.98883  & 0.011068 & 0.988194\\    
    \hline      
\end{tabular}
\caption{Total error and Rate of convergence(RoC) under both Galerkin and $ASGS$ method for small Reynolds number(Re=50) at $T=1$}
    \end{table}
 
  \begin{table}[]
\centering
\begin{tabular}{| *{6}{c|} }
    \hline
 Time &  Grid     & \multicolumn{2}{c|}{Galerkin method}
            & \multicolumn{2}{c|}{ASGS method}\\
      \hline       
  step & size     & Total error & RoC & Total error & RoC \\
 \hline
0.1& 10 $\times$ 10 & 0.170253 &  & 0.158437 & \\
   \hline
 0.05&    20 $\times$ 20 & 0.0871451 & 0.966187  & 0.0833212 & 0.92715\\  
     \hline
  0.025&   40 $\times$ 40 & 0.043821 & 0.991797  & 0.0431014 & 0.950949\\   
    \hline         
 0.0125&   80 $\times$ 80 & 0.022057 & 0.990389  & 0.0219237 & 0.975243\\  
     \hline         
 0.00625&   160 $\times$ 160 & 0.011189 & 0.979155  & 0.011076 & 0.985054\\    
    \hline      
\end{tabular}
\caption{Total error and Rate of convergence(RoC) under both Galerkin and $ASGS$ method for small Reynolds number(Re=500) at $T=1$}
    \end{table}
 
 \begin{table}[]
\centering
\begin{tabular}{| *{6}{c|} }
    \hline
 Time &  Grid     & \multicolumn{2}{c|}{Galerkin method}
            & \multicolumn{2}{c|}{ASGS method}\\
      \hline       
  step & size     & Total error & RoC & Total error & RoC \\
 \hline
0.1& 10 $\times$ 10 & 0.226209 &  & 0.158438 & \\
   \hline
 0.05&    20 $\times$ 20 & 0.164603 & 0.458663  & 0.0833293 & 0.927026\\  
     \hline
  0.025&   40 $\times$ 40 & 0.0822173 & 1.00148  & 0.0431091 & 0.950832\\   
    \hline         
 0.0125&   80 $\times$ 80 & 0.0310324 & 1.37098  & 0.0219345 & 0.974791\\  
     \hline         
 0.00625&   160 $\times$ 160 & 0.022146 & 0.486729  & 0.0110245 & 0.992489\\    
    \hline      
\end{tabular}
\caption{Total error and Rate of convergence(RoC) under both Galerkin and $ASGS$ method for small Reynolds number(Re=5000) at $T=1$}
    \end{table}
 
  \begin{table}[]
\centering
\begin{tabular}{| *{6}{c|} }
    \hline
 Time &  Grid     & \multicolumn{2}{c|}{Galerkin method}
            & \multicolumn{2}{c|}{ASGS method}\\
      \hline       
  step & size     & Total error & RoC & Total error & RoC \\
 \hline
0.1& 10 $\times$ 10 & 0.159204 &  & 0.158826 & \\
   \hline
 0.05&    20 $\times$ 20 & 0.0834547 & 0.931812  & 0.0834583 & 0.928321\\  
     \hline
  0.025&   40 $\times$ 40 & 0.0430917 & 0.953584  & 0.043141 & 0.951997\\   
    \hline         
 0.0125&   80 $\times$ 80 & 0.021942 & 0.973715  & 0.0219748 & 0.973209\\  
     \hline         
 0.00625&   160 $\times$ 160 & 0.011122 & 0.980278  & 0.011022 & 0.993703\\    
    \hline      
\end{tabular}
\caption{Total error and Rate of convergence(RoC) under both Galerkin and $ASGS$ method for variable viscosity and diffusion coefficients (first sub-case) at $T=1$}
    \end{table}
 
  \begin{table}[]
\centering
\begin{tabular}{| *{6}{c|} }
    \hline
 Time &  Grid     & \multicolumn{2}{c|}{Galerkin method}
            & \multicolumn{2}{c|}{ASGS method}\\
      \hline       
  step & size     & Total error & RoC & Total error & RoC \\
 \hline
0.1& 10 $\times$ 10 & 0.236613 &  & 0.161085 & \\
   \hline
 0.05&    20 $\times$ 20 & 0.201906 & 0.22884  & 0.0855817 & 0.912444\\  
     \hline
  0.025&   40 $\times$ 40 & 0.128248 & 0.654755  & 0.0445594 & 0.941572\\   
    \hline         
 0.0125&   80 $\times$ 80 & 0.0495898 & 1.37082  & 0.0227193 & 0.971811\\  
     \hline         
 0.00625&   160 $\times$ 160 & 0.041146 & 0.269291  & 0.011148 & 1.027133\\    
    \hline      
\end{tabular}
\caption{Total error and Rate of convergence(RoC) under both Galerkin and $ASGS$ method for variable viscosity and diffusion coefficients (second sub-case) at $T=1$}
    \end{table}

$\textbf{(I)First case:}$ Here we have considered constant viscosity coefficient and therefore the coupled system becomes an one-way coupling. The importance behind considering this case is here that we want to verify the performance of $ASGS$ method for different Reynolds number. Here diffusion coefficients are also taken constant. \vspace{1mm}\\
$\textbf{(a)Small Reynolds number}$ The exact solutions remain same. The values of Reynolds number $Re$=50, diffusion coefficient $D$=2 and reaction coefficient $\beta$=0.01. \vspace{1mm}\\
Table 1 presents total errors and rates of convergence (RoC) of the coupled system for this case under Galerkin and $ASGS$ methods for different time steps $dt$ and grid sizes. It is clearly seen that both Galerkin and $ASGS$ method performs equally well for small Reynolds number. We can conclude the order of convergence for each of the methods is 1.\vspace{1mm}\\
$\textbf{(b) Medium Reynolds number:}$ For this case the values of coefficients are taken as $Re$=500, $D$=2 and $\beta$=0.01. Similar to the previous case table 2  represents the total errors and rates of convergence of the coupled system for this case under Galerkin and $ASGS$ methods  for different time steps $dt$ and grid sizes. In this case though both Galerkin and $ASGS$ method perform equally well and retain the desired first order convergence, but total error obtained in $ASGS$ method is less compared to that of Galerkin method. \vspace{1mm}\\
$\textbf{(c)Large Reynolds number}$ The values of the coefficients are considered as $Re$=5000, $D$=2 and $\beta$=0.01. Table 3 presents the total errors and rates of convergence of the coupled system for this case under Galerkin and $ASGS$ methods. It can be observed that Galerkin method behaves in somewhat oscillatory manner and it is not possible to conclude a definite order of convergence for this case, whereas $ASGS$ method performs consistently well at every time steps and grid sizes and rate of convergence in this case again turns out to be 1. \vspace{1mm}\\
$\textbf{(II)Second case:}$ Here we consider the viscosity to be dependent upon concentration and hence $Navier$-$Stokes$ and Transport equations are coupled in two-way manner. The proposed expression of concentration dependent viscosity is taken from \cite{RefP} and depending upon different viscosity coefficients we have divided this case into two sub-cases. In this case we have considered variable diffusion coefficients as follows: \vspace{1mm}\\
$D_1$= $e^{-t}y^2(y-1)^2(2y-1)^2x^4(x-1)^4$ and $D_2$= $e^{-t}x^2(x-1)^2(2x-1)^2y^4(y-1)^4$ \\
and the reaction coefficient $\beta$=0.01.\vspace{1mm}\\
$\textbf{(a) First sub-case:}$ The viscosity coefficient is $\mu(c)=0.00954 e^{27.93 \times 0.028 c}$. Table 4 presents the total errors and rates of convergence of the coupled system for this case under Galerkin and $ASGS$ methods for different time steps $dt$ and grid sizes. Both  Galerkin and $ASGS$ method performs equally well and order of convergence for both the methods is 1. \vspace{1mm}\\
$\textbf{(b) Second sub-case:}$ Here we have considered slightly small viscosity coefficient $\mu(c)=0.0000954 e^{27.93 \times 0.028 c}$. Table 5 presents the total errors and rates of convergence of the coupled system for this case under Galerkin and $ASGS$ methods. This table shows that Galerkin method performs poorly, whereas the $ASGS$ method performs far better and obtains the desired first order convergence.

\section{Conclusion}
This paper presents algebraic $subgrid$ $multiscale$ stabilized finite element analysis of transient $Navier$-$Stokes$ fluid flow equation strongly coupled with unsteady $VADR$ transport problem. Consideration of concentration dependent viscosity makes this time dependent coupling more accurate to model real life based contemporary problems. To ensure the efficiency of the stabilized finite element method for this model, both $apriori$ and $aposteriori$ error estimates have been derived in detail. It is essential to mention that the norm employed for error estimation consists of the full norms corresponding to each variable belonging to their respective spaces. Therefore it provides a wholesome information about convergence of the method. Theoretically the rate of convergence for both $apriori$ and $aposteriori$ error estimations is $O(h)$ in space and first and second order convergences have come out  for two implicit time discretization schemes viz. backward Euler and Crank-Nicolson methods respectively. The accuracy of the stabilized method has been numerically tested through considering two different kind of examples and various possible combinations among them. Numerical results both in tabular and figure representations show better performance of the stabilized $ASGS$ method than standard $Galerkin$ finite element method and verify theoretically established results too.\vspace{1cm}\\
{\large \textbf{Acknowledgement}} \vspace{2mm}\\
This work has been supported by grant from Innovation in Science Pursuit for Inspired Research (INSPIRE) programme sponsored and managed by the Department of Science and Technology(DST), Ministry of Science and Technology, Govt.of India.

\end{document}